\numberwithin{equation}{section}
\theoremstyle{plain}
\newtheorem{theorem}{Theorem}[section]
\theoremstyle{definition}
\newtheorem{claim}{Claim}[section]
\newcommand{\E}{\mathbb E}
\newcommand{\p}{\mathbb P}
\def\P{\mathbb P}
\newcommand{\eq}{\eqref}
\def\be#1{\begin{equation*}#1\end{equation*}}
\def\ben#1{\begin{equation}#1\end{equation}}
\def\bes#1{\begin{equation*}\begin{split}#1\end{split}\end{equation*}}
\def\besn#1{\begin{equation}\begin{split}#1\end{split}\end{equation}}
\begin{document}

\title{\sc\bf\large\MakeUppercase{Segmentation and Estimation of Change-point Models: False Positive
Control and Confidence Regions}}
\author{\sc Xiao Fang, \sc Jian Li and \sc David Siegmund}
\date{\it The Chinese University of Hong Kong, Adobe Systems and Stanford University} 
\maketitle

\begin{abstract}
To segment a sequence of independent random variables at an unknown
number of change-points, we introduce new procedures that are based
on thresholding the likelihood ratio statistic. 
We also study confidence regions  based on the likelihood ratio statistic for the 
change-points and joint confidence regions for
the change-points and the parameter values.
Applications to segment array CGH data are discussed.

\end{abstract}

\medskip

\noindent{\bf AMS 2000 subject classification:} 62G05, 62G15.

\noindent{\bf Keywords and phrases:} Array CGH analysis, change-points, confidence regions, exponential families, likelihood ratio statistics.

\section{Introduction}
Diverse scientific applications have led to
recent interest in segmentation of models involving multiple 
change-points.  A model having some direct applicability and additional
theoretical interest for the insights it provides 
is as follows.  Let $X_1, X_2, \ldots, X_m$ be independent and normally
distributed with variances equal to 1.   Assume that there exist $M \geq 0$ 
and integers $0 = \tau_0 < \tau_1 < \ldots \tau_M < \tau_{M+1} = m$
such that the mean $\mu_i$ of $X_i$, $1\leq i\leq m$, is a step function with constant
values on each of the intervals $(\tau_{k-1}, \tau_{k}]$, $1\leq k\leq M+1$, but different values
on adjacent intervals.  
Segmentation amounts to determining the value of $M$, the $\tau_k$ and
perhaps also the $\mu_i$.
Because of the computational difficulty of sorting through all possible
partitions of $[1,m]$ to find the change-points when $m$ is large, 
there have often been
different algorithms for suggesting
a set of candidate change-points $\tau_k$ and for determining which of
those possible sets is ``correct.''  For example, one might use a
dynamic programming algorithm to propose a relatively small set of
possible $M$ and $\tau_k, 1 \leq k \leq M$, then use a statistical 
procedure to determine a final choice from those suggested in the
first stage of analysis.   
For recent reviews imbedded in otherwise original research articles
see \cite{Mu14} and
\cite{Fr14}.  Recent consistency results under essentially 
minimal conditions on the spacing and amplitude of the
change-points are given
in \cite{CC2017}.

Substantial motivation for recent research has 
been copy number variation (CNV) in genetics (e.g., \cite{OlVe04}, \cite{Pollack99},
\cite{Pi05}, \cite{La05}, \cite{Snijders03}, \cite{Zhao04}, \cite{ZhSi07},
\cite{NZ12}, \cite{Mu14},  \cite{ZYXS2016}). CNV can occur as somatic
mutations, especially in cancer cells, where they can involve a substantial portion of a
chromosome and show no particular pattern, or as germline mutation, 
which typically involves a short interval exhibiting an increase or decrease
in the mean followed by a decrease or increase that returns the mean to
a baseline value.
Like other 
genetic polymorphisms inherited CNV can be used to track relatedness 
of different individuals in populations or may be
of interest because of a possible relation to particular inherited diseases. 
Data in the literature can help us determine interesting sample sizes 
and values for parameters
in our numerical examples.  The sample size
$m$ is typically moderately large to large, while $M$ can be small or 
large in an absolute sense,
while still small compared to $m$; and consecutive change-points can be 
quite close together.

Another genomic application involves sequences of Bernoulli variables, which 
equal 0 or 1 according as the DNA letter at that 
location is A or T, or is C or G. Since a CG ``rich'' region is an indication of 
the presence of a gene or genes, it may 
be useful to segment a genome or part of 
a genome into regions of relatively low or high CG content. 
See, for example, \cite{Ch89} who used a Hidden Markov Model, 
or \cite{EGJ10}.
A variety of other examples motivated by particular scientific experiments
is given by \cite{DKK16}.  In particular they describe examples where
several consecutive changes are expected to have the same sign and where the pattern
of change-points may arise from a hidden Markov model. 

Scientific focus may emphasize detection and estimation of the
change-points, 
estimation of the step function of mean values, or a combination of the two.
Our primary focus is on the change-points themselves, 
which in a genomic context indicate
the existence and location of a signal of interest. 
 
To this end we study iterative thresholding methods that allow one
(with varying degrees of success, discussed below) to control the global 
false positive error rate; and subject to successful control, 
to understand the relative strengths and weaknesses of different methods. 
We also provide approximations to the local power (defined below) and
large sample joint confidence
regions for the change-points or for the change-points and mean values.

It bears emphasizing that we have {\sl not} considered a large class of other methods,
in particular
dynamic programming to compute
a penalized likelihood function, or two stage methods, where a 
list of candidate 
change-points obtained in the first stage is 
followed by a model selection method.
(Three of the four methods used below for comparative  purposes have been originally
proposed as two stage methods, but we have adapted them to be single stage 
thresholding methods.)
To the best of our knowledge, these methods all involve selection of 
arbitrary parameters that may have no simple statistical interpretation.
In contrast we have
produced a set of tools to study a restricted set of procedures in terms of the
classical statistical criteria of false positive and false negative error rates; and 
for each statistic there is a single thresholding parameter that we use to attempt to balance
these two error probabilities.

To motivate the methods introduced below
let $S_0=0$, $S_j = \sum_1^j X_i$ for $j\geq 1$ and
consider the generalized likelihood ratio
statistic for testing the hypothesis  $M = 0$ against
$M = 1$:  $\max_{0< j< m} |S_j - jS_m/m|/[j(1-j/m)]^{1/2}.$  This 
statistic is the basis
of the binary segmentation suggestion of \cite{Vo81}, 
which is a ``top down'' procedure, in the sense that one
tests all the data to determine if there is {\sl at least} one change-point
and iterates the procedure in the intervals immediately to 
the ``left'' and ``right'' 
of the most recently detected change-point.  We discuss below the
weaknesses of this method compared to a number of other thresholding
procedures.


Here we consider
``bottom up'' procedures motivated by the observation that in the presence of 
multiple change-points or to mitigate the effects of inadequately 
controlled drift in 
the ``baseline''  mean value (see below), it may be useful 
to compare a candidate
change-point at $j$ to an appropriate ``local'' background $(i,k)$,
where $i < j < k$.  
Similar approaches are the Wild Binary Segmentation (WBS) 
of \cite{Fr14}, which uses a random
set of possible backgrounds and an apparently empirically determined
threshold, and the method of
\cite{NZ12}, who use a limited number of symmetric backgrounds, to suggest
several sets of candidate change-points followed by model
selection to make the final choice. (Our suggested procedures 
could also form part of a
two-stage procedure, but here we consider in detail only a single stage,
which controls the rate of false positives.) 

To that end, consider 
\begin{equation} \label{maxLR}
\max_{0 \leq i < j< k \leq m} |Z_{i,j,k}|,
\end{equation}
 where for $i < j < k$:
\begin{equation} \label{LR}
Z_{i,j,k} = [S_j - S_i - (j-i)(S_k - S_i)/(k-i)]/[(j-i)(1-(j-i)/(k-i))]^{1/2}.
\end{equation}

Our first theoretical result is an approximation for the tail probability of
(\ref{maxLR}) when there is no change.  This approximation 
gives strong control on the
probability of a false positive result in the sense that if there are 
changes, say at $0=\tau_0<  \tau_1 < \ldots < \tau_M < \tau_{M+1} = m,$ 
the maximum of 
(\ref{LR})
over $n = 0, \ldots, M$ and $ \tau_n \leq i < j < k \leq \tau_{n+1}$ is stochastically smaller
than (\ref{maxLR}).  Hence, except for an event
of the probability evaluated asymptotically in Theorem 2.1,
any background interval 
$(i,k)$ where the statistic 
exceeds the threshold at an intermediate value of $j$ will contain 
at least one change-point.   

Our second principal result is an approximate likelihood ratio  confidence
region jointly for the change-points $\{ \tau_k, k = 1, \ldots M\}$
or for the change-points
and the mean values.  A related result allows one to approximate the
local power (cf. Section 3.2), which we find useful in helping 
us understand which change-points are
relatively easily detected and which may be missed.

In more detail, our segmentation procedure based on \eq{LR},
which we call the Local Likelihood Ratio (LLR)is as follows.
(In the following, we use the same acronym to refer to both the statistic and 
associated segmentation procedures.)
Because of local correlations between different $Z_{i,j,k}$,
thresholding (the absolute value of) (\ref{LR}) produces a
frequently large list of candidate
change-points $j$, each one against multiple backgrounds $(i,k)$.
Since our goal is to detect individual changes against 
the appropriate background 
we find it convenient in searching 
the list of candidates to require
that the background for one candidate
change-point $j$ not overlap another candidate change-point $j'$ in
the sense that if
$j < j'$,  the corresponding backgrounds should
satisfy $k \leq j'$ and $i' \geq j$.  This can be accomplished
by sequentially re-evaluating candidate change-points
until they satisfy the constraint.  
Hence, when a new change-point is identified, the existing putative change-points to its left and to 
its right may need to be removed or altered.
An approach
that requires very little  and usually no re-evaluation of candidate
change-points is to select the shortest of the possible backgrounds
$(i,k)$ from among those for which
$Z_{i,j,k}$ exceeds the required threshold, which is similar to the
method recommended in 
\cite{BCF16}. If there is a tie for the shortest
value of $k-i$, we choose the one with the largest value of $|Z_{i,j,k}|$.
Another possibile algorithm is
selection based on the largest value of the statistic, which
rarely leads to significant differences from selection based on the
shortest background, although now iteration to enforce the no overlap 
condition is common. 

For various scientific reasons and in particular
for determination of the confidence regions discussed below 
an important consideration is
the size, as well as the location, of the change.  
Although it seems natural to conjecture that the  
largest $|Z|$-value, subject to no overlap, provides the most
accurate estimate since it is based on 
a longer background, from simulations we can see
that this is by no means always the case. 
Since we have ``paid up front'' for
protection against false positive errors, we can also choose to 
look at a number of candidate
change-point-background combinations to find one that is
subjectively appealing.  Simple possibilities that seem to 
present themselves frequently
are to choose from  among the $(i,j,k)$ combinations having, say, the five 
shortest backgrounds the value of $j$ that appears most frequently, or  has
the largest $Z$-score, or also heads the list generated 
by the algorithm that 
is focused on the
largest $Z$-score.

We also study a pseudo-sequential
procedure (SLLR) where we initially set $i = 0,$ find the
smallest 
$k>i+1$
such that
$\max_{i < j < k} |Z_{i,j,k}|$ is above an appropriate threshold, set $j_1$ equal to
the largest such $j$ or the maximizing value of $j$, then set $i = j_1$ and 
iterate the process.  
This procedure has lower computational complexity than LLR, although it is still a bottom up 
procedure in the sense that each detected change-point is compared to a local background
that ideally contains no other change-points to introduce potential distortions into the 
location or size of the change.
Because SLLR has a lower threshold than LLR, it also has larger power to detect change-points.
However, as explained below Theorem \ref{t3}, we do not have as strong a theoretical guarantee 
that the false positive error probabilities are controlled.
See
the unpublished Stanford Ph. D. thesis of E.S. Venkatraman for an
early discussion of a similar idea.

The paper is organized as follows.  In Section 2 we 
give approximations to control the false positive probabilities of our
proposed and other segmentation methods.  
Approximate joint confidence regions are discussed in Section 3.   
Using simulations and analysis of some real data, we compare our 
methods in Section 4 with other thresholding 
methods that 
control false positive probabilities with varying degrees of success,
and we give some numerical examples for confidence regions.
Sections 5 contains extensions to exponential
families, and Section 6 some additional discussion. 
In the Appendix we prove the theorems stated in Sections 2 and 3.



\medskip\noindent
{\bf Remarks.}
(i) For some applications, e.g., for inherited
CNV, the signal to be detected extends over a
relatively short range in the form of a 
departure from a baseline value where one
change is followed by a second, nearby change in the opposite
direction. 
For problems of this form it seems
reasonable to use statistics adapted to the expected shape of the signals,
(e.g., \cite{OlVe04}, \cite{Mu14}, \cite{ZhLJS2010}).  
We give appropriate approximations for false positive control in Section 2
and consider such procedures in more detail in Section 4, where we show 
that they perform
very well even when there is no particular pattern to the change-points.
Inherited CNV also provide motivation for studying multivariate
observations (i.e., multiple DNA sequences), since the change-points 
may be difficult to detect in individual sequences and their 
occurrence in several sequences 
indicate possible relationships among those sharing the same 
change-points (\cite{ZhLJS2010}).

\smallskip\noindent
(ii) We have assumed the variance of the observations is known. The 
sequences are usually long enough that, under our assumption of independence, 
the variance can  be accurately estimated by
one-half the average of the squared
differences of consecutive observations.   This estimator avoids
the substantial upward bias of the empirical variance of the data,
which arises when there are multiple change-points
and the changes themselves show no particular pattern.
Alternatives are to use a function of the order statistics of
adjacent observations, e.g., the median of $|X_{i} - X_{i-1}|$ or 
the interquartile range of $X_{i+1} - X_i$, multiplied by suitable constants.
Still another estimator that may have some value is the average 
of squared second order differences of consecutive observations, which has
the virtue of nullifying the effect of linear drift and perhaps also the 
relatively slow oscillations that plague some genomic applications
(\cite{OlVe04}).
These estimators based on pairwise differences are inappropriate when 
the data are autocorrelated,
a problem we expect to study in the future.
An example where the empirical variance is satisfactory occurs when
the data are of the
form envisioned in Remark (i), where change-points occur in  a relatively sparse set 
of departures 
from a baseline value followed by a return to the baseline a few observations
later.

\smallskip\noindent
(iii)  In recent research some authors recommend 
a multiscale modification of  the likelihood ratio statistic.  One possibility is to modify
(\ref{LR}) by 
subtracting $\{2 \kappa \log[3m/\min(j-i,k-j)]\}^{1/2}$, 
$\kappa > 0$, in order to obtain greater power to 
detect relatively small changes that persist over longer intervals at the cost of less
power to detect large changes that come relatively close together.  
See, for example, \cite{DSp01} and \cite{Mu14}.  Our methods can be
adapted to study these modifications, and in Section 4 we
investigate a procedure based on the statistic recommended in \cite{Mu14}.  
However, these methods are not 
central to our studies for the following reasons.
(a) For problems of CNV detection, difficult detections in 
the synthetic data suggested
in the applied literature and in the real data in 
Section 4 often involve short intervals and relatively large changes.    
(b) What appear in the data to be small, relatively rare, changes 
may arise from technical artifacts in the
data and are not scientifically interesting 
(cf. \cite{OlVe04} and \cite{ZhLJS2010}).  
(c) Multiscale modifications are not uniquely defined; and in 
different problems different 
statistics may have slight advantages and disadvantages.  
(d) Multiscale methods do not appear to
adapt as naturally for the determination 
of confidence regions as the likelihood ratio statistic.

\section{Approximate $p$-values}

In what follows we write $A\asymp B$ to mean 
that $0<c_1\leq A/B\leq c_2<\infty$ for two absolute constants $c_1$ and $c_2$, 
and $A(b)\sim B(b)$ means $A(b)/B(b)\to 1$ as $b\to \infty$; also
$\varphi$ and $\Phi$ are the standard normal probability density function and distribution function,
respectively.

We have the following $p$-value approximation for
$\max_{0\leq i<j<k\leq m}|Z_{i,j,k}|$.
\begin{theorem}\label{t2}
Let $(X_1,\dots, X_m)$ be an independent sequence of normally distributed random variables  with
mean $\mu$ and variance 1.
Then for  $Z_{i,j,k}$ as defined by \eq{LR},
we have for $b\to \infty$ and $m\asymp b^2$,
\besn{\label{2.1}
&{\p} \{ \max_{0\leq i < j < k\leq m} |Z_{i,j,k}| \geq b \} \\
\sim & \frac{b^6 \Phi(-b)}{4} 
\sum_{u,v\in \{1,\dots, m\}: \atop u+v\leq m} 
\frac{(m-u - v)}{uv(u+v)} \nu[b (\frac{u}{v(u+v)})^{1/2}] \nu[b(\frac{v}{u(u+v)})^{1/2}]
\nu[b(\frac{u+v}{uv})^{1/2}].}
The function
$\nu$ is defined, e.g.,  in \cite{SiYa07} p. 112 and given to 
a simple approximation by the equation 
\[
\nu(x) = 
(\Phi(y)-1/2)/[y(y\Phi(y) + \varphi(y))],
\]
where $y = x/2$.
\end{theorem}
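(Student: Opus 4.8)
The plan is to treat $\{Z_{i,j,k}\}$ as a mean-zero, unit-variance Gaussian field on the lattice of triples $0\le i<j<k\le m$ and to apply the measure-transformation (likelihood-ratio) method for maxima of such fields developed by \cite{SiYa07}. First I would record the convenient form
\[
Z_{i,j,k}=\frac{vA-uB}{\sqrt{uv(u+v)}},\qquad u=j-i,\ v=k-j,\ A=S_j-S_i,\ B=S_k-S_j,
\]
so that under the null $A,B$ are independent, the common mean $\mu$ cancels, $\var Z_{i,j,k}=1$, and the field is invariant under the translation $(i,j,k)\mapsto(i+1,j+1,k+1)$. A short computation shows that \eq{2.1} is exactly the canonical output of the method: writing $\theta_1=b\{u/[v(u+v)]\}^{1/2}$, $\theta_2=b\{v/[u(u+v)]\}^{1/2}$, $\theta_3=b\{(u+v)/(uv)\}^{1/2}$, one has $\theta_1^2\theta_2^2\theta_3^2=b^6/[uv(u+v)]$ and $f_1(b^2)=\varphi(b)/b$, whence the right-hand side of \eq{2.1} equals, as a matter of algebra,
\[
2\,f_1(b^2)\sum_{u,v:\,u+v\le m}(m-u-v)\ \prod_{\ell=1}^{3}\Big[\tfrac12\,\theta_\ell^2\,\nu(\theta_\ell)\Big].
\]
This reformulation tells me what each factor must come from: the $2$ from $\pm$ symmetry; $f_1(b^2)\sim1-\Phi(b)$ as the single-point tail; the weight $(m-u-v)\approx\#\{i:0\le i,\ i+u+v\le m\}$ as the number of clump centers of a given shape $(u,v)$; and the three bracketed factors as local overshoot corrections in the lattice directions $k$, $i$, $j$ respectively, the assignment being fixed by $\theta_1\to0$ as $v\to\infty$, $\theta_2\to0$ as $u\to\infty$, and $\theta_3\to0$ as $u,v\to\infty$.

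Next I would set up the localization. For each center $\theta=(i,j,k)$ I tilt the measure by $d\p_\theta/d\p=\exp(bZ_\theta-b^2/2)$, so $Z_\theta\sim N(b,1)$ under $\p_\theta$, and use the identity expressing $\p\{\max Z\ge b\}$ as $\sum_\theta(1-\Phi(b))\,\E_\theta[\widehat M_\theta/M_\theta]$ with $M_\theta=\sum_{\theta'}\exp[b(Z_{\theta'}-Z_\theta)]$ and $\widehat M_\theta$ the analogous sum over the exceedance set; the factor $2$ enters because positive and negative exceedances co-occur only on a negligible event, so $\p\{\max|Z|\ge b\}\sim2\,\p\{\max Z\ge b\}$. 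The heart of the argument is to show that, under $\p_\theta$ as $b\to\infty$, the rescaled local field $\{b(Z_{\theta'}-Z_\theta)\}$ along each of the three coordinate directions converges to a random walk with i.i.d.\ increments of per-step standardized size $\theta_1,\theta_2,\theta_3$, that the three directions decouple asymptotically, and that each of the resulting one-dimensional renewal problems contributes the ladder-height constant $\tfrac12\theta_\ell^2\nu(\theta_\ell)$. This is where $\nu$ enters, through the standard expected-overshoot series
\[
\nu(x)=\frac{2}{x^2}\exp\left\{-2\sum_{n=1}^{\infty}\frac1n\,\Phi\!\Big(-\frac{x\sqrt n}{2}\Big)\right\}.
\]
Granting the decoupling, $\E_\theta[\widehat M_\theta/M_\theta]\sim\prod_{\ell=1}^3\tfrac12\theta_\ell^2\nu(\theta_\ell)$, and summing over all centers (re-expressing $\sum_{i<j<k}$ as $\sum_{u,v}(m-u-v)$ by translation invariance) reproduces the display above, hence \eq{2.1}. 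Tracking the constants gives $2\cdot(\tfrac12)^3=\tfrac14$, matching the stated prefactor.

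The hard part, and where the regime $m\asymp b^2$ is essential, is to make the local approximation and the three-way factorization hold \emph{uniformly} over the full scale range $1\le u,v$, $u+v\le m$. The calibration $m\asymp b^2$ forces the dominant contribution to come from $u,v\asymp m$, where $\theta_\ell=O(1)$ and the local increments are genuinely non-degenerate; but the sum also receives contributions from short blocks $u,v=O(1)$, where $\theta_\ell\asymp b$ is large, $\nu(\theta_\ell)\sim2/\theta_\ell^2$, and the random-walk and renewal approximations are most delicate, and from the boundary strips $i\approx0$, $k\approx m$. I would therefore establish the local limit with explicit error bounds, split the $(u,v)$-range into a moderate-scale bulk and short-block/boundary remainders, and show the remainders are $o$ of the main term. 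I expect controlling the short-block regime, and verifying that the cross-direction dependence is asymptotically negligible uniformly in $(u,v)$, to be the principal technical obstacle; the Riemann-sum convergence of $\sum_{u,v}(m-u-v)\prod_{\ell=1}^3[\tfrac12\theta_\ell^2\nu(\theta_\ell)]$ then follows once the uniform estimates are in place.
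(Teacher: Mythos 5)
Your proposal is correct in substance---the algebraic reformulation, the assignment of the three $\nu$-factors to the lattice directions $k$, $i$, $j$, and the constant $2\cdot(\tfrac12)^3=\tfrac14$ all check out---but it takes a genuinely different route from the paper's proof. You invoke the measure-transformation (likelihood-ratio identity) method in the style of \cite{Si88a} and \cite{Ya13}, tilting at each triple and localizing via $\E_\theta[\widehat M_\theta/M_\theta]$ (note that the identity as you state it, with $1-\Phi(b)$ factored out exactly, is the heuristic form and would need to be made exact). The paper explicitly mentions those methods as ``adaptable, albeit with more and less intuitive computation,'' and instead uses the conditioning device it credits to \cite{ZhLi11}: decompose according to which triple attains the maximum,
\[
\p\{\max Z\ge b\}=\sum_{i,j,k}\int_b^{\infty}
\p\{\max_{r,s,t}Z_{r,s,t}\le b+x \mid Z_{i,j,k}=b+x\}\,\p\{Z_{i,j,k}\in b+dx\},
\]
and evaluate the conditional non-exceedance probability directly. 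There, a union bound (Claim \ref{claim2}) restricts the conditional maximum to perturbations within $C\log b$ of $(i,j,k)$; the conditional limit theorem of \cite{DiFr88} shows that, given $Z_{i,j,k}=b$, the six one-sided perturbation processes are asymptotically independent Gaussian random walks with drifts $-d_1,-d_1,-d_2,-d_2,-d_3,-d_3$, where $2d_\ell$ are precisely your $\theta_\ell$; and Corollary 8.44 of \cite{Si85} gives each direction's probability of staying below zero as $(2d_\ell^2)\nu(2d_\ell)=\tfrac12\theta_\ell^2\nu(\theta_\ell)$---exactly your bracketed ladder-height factors. The symmetrization factor $2$ is the paper's Claim \ref{claim1}, as in your sketch, and the $(m-u-v)$ count of translates is common to both. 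What each approach buys: the conditioning route replaces the exact tilting identity by an explicit conditional sample-path computation, elementary once the conditional limit theorem is in hand (this is the ``more intuitive computation'' the paper alludes to); your route is more standardized machinery and is the natural one for generalizations such as the exponential-family approximation \eq{14}. Finally, on the uniformity issue you rightly flag: rather than bounding short-block and boundary remainders directly as you propose, the paper proves the restricted statement \eq{2.1*} for $j-i,\,k-j\ge c_0 b^2$ (so that all $\theta_\ell\asymp 1$ and the renewal quantities are nondegenerate) and then lets $c_0\to 0$; your concern about the regime $\theta_\ell\asymp b$ is genuine, and that terse limiting step is where the paper absorbs it.
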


For the proof of Theorem \ref{t2} we use a new method beginning from an observation of \cite{ZhLi11}, which
was used there as the basis for
Monte Carlo simulation with a one (time) dimensional random
field and which we have used for an analytic approximation
involving  maxima of
certain three (or higher dimensional) random fields.  
The starting point is a number of simple observations, which 
require  a large number of detailed calculations for complete
justification.

Denote the right-hand side of \eq{2.1} by $p$. Rewrite $p$ as
\be{
p\sim \frac{\varphi(b) }{4b}
 \sum_{u,v\in \{1,\dots, m\}: \atop u+v\leq m}
(m-u - v)\left\{\frac{b^6}{uv(u+v)} \nu[b (\frac{u}{v(u+v)})^{1/2}] \nu[b(\frac{v}{u(u+v)})^{1/2}]
\nu[b(\frac{u+v}{uv})^{1/2}] \right\}.
}
It was shown in \cite{Si85} that $\nu(x)=\exp(-cx)+o(x^2)$ as $x\to 0$
for $c\approx 0.583$, while $x^2 \nu(x)/2\to 1$ as $x\to \infty$.
Therefore, the term inside the curly brackets  above is bounded. Hence
\ben{\label{A.1}
p\asymp b^5 \varphi(b)\to 0,
}
where we used the assumption that $m\asymp b^2$.

Fix a sufficiently small constant $c_0$.
We will prove first that
\besn{\label{2.1*}
&{\p} \{ \max_{0\leq i < j < k\leq m: \atop j-i, k-j\geq c_0 b^2} Z_{i,j,k} \geq b \} \\
\sim & \frac{1}{8} b^5 \varphi(b)
\sum_{u,v\in \{1,\dots, m\}: \atop u+v\leq m; u, v\geq c_0 b^2 }
\frac{(m-u - v)}{uv(u+v)} \nu[b (\frac{u}{v(u+v)})^{1/2}] \nu[b(\frac{v}{u(u+v)})^{1/2}]
\nu[b(\frac{u+v}{uv})^{1/2}].
}
We write
\bes{
&{\p}(\max_{0\leq i<j<k \leq m \atop  j-i, k-j \geq c_0 b^2} Z_{i,j,k}\geq b)\\
=& \sum_{0\leq i<j<k \leq m \atop  j-i, k-j \geq c_0 b^2} {\p}(Z_{i,j,k}\geq b, Z_{i,j,k}=\max_{0\leq r<s<t \leq m \atop s-r, t-s \geq c_0 b^2} Z_{r,s,t}) \\
= &\sum_{0\leq i<j<k \leq m \atop  j-i, k-j \geq c_0 b^2}  \int_0^{\infty}
{\p} (\max_{0\leq r<s<t \leq m \atop  s-r, t-s \geq c_0 b^2} Z_{r,s,t}\leq b+x | Z_{i,j,k}=b+x) {\p}(Z_{i,j,k} \in b+dx)\\
=&\sum_{C\log b\leq i<j<k \leq m-C\log b \atop  j-i, k-j \geq c_0 b^2}  \int_b^{b+1}
{\p}(\max_{0\leq r<s<t \leq m \atop  s-r, t-s \geq c_0 b^2} Z_{r,s,t}\leq x | Z_{i,j,k}=x) d{\p} + R
}
where $C$ is a positive constant to be chosen.  The rest of the proof involves a detailed analysis of these expressions
to show that $R$ and various other terms that have been ignored are indeed negligible.  These technical details are given
in Appendix A.  

\medskip\noindent
\noindent{\bf Remarks.}
(i)  
Other methods that appear to be adaptable to prove Theorem \ref{t2}, 
albeit with more, less intuitive,
computation are those of \cite{Si88a} and
of \cite{Ya13}.  

\smallskip\noindent
(ii) Usually we are interested in small probabilities, and then we can use
(\ref{2.1}) as given.  Occasionally we may be interested in
cases where $m$ is so large that the probability is not small.  In those
cases we can supplement our large deviation approximation with a ``Poisson'' 
approximation in the form
$ 1- \exp[-{\rm RHS}(\ref{2.1})]$, which reduces to our approximation 
when the probability is small.
See \cite{SiYa00} for a proof in a 
related case.

\smallskip\noindent
(iii)   Based on other, related, calculations (see, for example, (5)
in \cite{ZhLJS2010}), it seems clear that similar
results apply to  multivariate data with some (mostly minor)
changes.  This case is
particularly interesting for detection of inherited CNV, which are short and
sometimes difficult to detect in single DNA sequences (e.g.,
\cite{ZhLJS2010}).  It is also interesting to
infer which subsets of the distributions have changed at the various
change-points.
The required modifications of the approximation given in the theorem are   
replacing $\Phi(-b)$ by $f_d(b^2)$ where $f_d$ is the 
$\chi^2$ probability density function with $d$ degrees of freedom 
and (to account for the 
curvature of the sphere when the dimension $d$ is large) multiplying the entire expression by
$q^5$ and the arguments of the functions $\nu$ by
$q$, where $q = 1-(d-1)/b^2$.  See (2.9) below.

\smallskip\noindent
(iv) For a multiscale statistic along the lines suggested in Remark (iii) at 
the end of Section 1, where
we subtract, 
for example, $\{2 \kappa \log[3m/\min(j-i,k-j)]\}^{1/2}$ from (\ref{LR}), 
a similar approximation holds, with the right hand side modified by replacing
$b$ by $b(u,v) = b +\{2 \kappa \log[3m/\min(u,v)]\}^{1/2}$ and moving the 
expressions involving $b(u,v)$ inside the summation.  See (2.9) below for a
similar approximation involving the (\cite{Mu14}) recommended statistic.  

\smallskip\noindent
(v) In applications we may wish to put a lower and/or an upper bound on 
the length of the background, e.g., 
$m_0\leq j-i, k-j\leq m_1$. The appropriate change to \eq{2.1} is to restrict the summation on the right-hand side to $m_0\leq u,v\leq m_1$.
For applications where very short intervals between
change-points can occur, we may want to take $m_0 = 1.$
Values of $m_1 \ll m$ 
can be used to minimize detection of small jumps, which
may themselves reflect experimental artifacts that lead to drift in the
underlying distributions (cf. \cite{OlVe04}, \cite{ZhLJS2010}); and they 
speed up what may otherwise 
be time consuming computations
for large values of $m$.  

\smallskip\noindent
(vi)  If there are a large number of change-points to be detected, one 
might prefer to control the rate of false positive errors via the 
false discovery rate (FDR).  In change-point problems it is important to distinguish between
discoveries and ``tests,'' since in our context many correlated tests
may refer to relatively few change-points.  Efforts to clarify and deal with
this distinction that seem applicable in principle to 
our segmentation problem are \cite{SGA11}, \cite{HNZ13},  and \cite{SiYaZh11}; but since 
the number of change-points in our motivating examples is typically not
large, we do not consider this possibility in detail.

\smallskip\noindent
(vii)  One can choose to approximate the probability in the theorem 
by simulation; and to do this once for a particular study does not
seem to pose difficulties.
Simulation may also be  useful to study variations of our
problem under different models.  
But the analytic approximations are much faster to evaluate, 
and hence we find it
useful to perform
a relatively limited set of simulations to gain confidence that 
our approximations are reasonably accurate, then use the approximations to
study and compare different statistics under different conditions.
We will, nevertheless, see below that simulations always play an important role.

\smallskip\noindent
(viii)  The natural setting for these approximations is the
likelihood ratio statistic in exponential families, as
discussed briefly in Section 5.  A more robust, although often
quite conservative, approximation is to replace the discrete
time random walk of the theorem by continuous time Brownian
motion.  The resulting approximation would look the same,
but the functions $\nu$ would be replaced by 1, and the
sums would be integrals (perhaps still evaluated as sums).
While this would in principle allow the theorem to be applied to
a wide variety of statistics, in specific cases the approximation may be
quite conservative. 

\medskip

For the SLLR, we have a similar approximation to the
probability of a false positive detection. 

\begin{theorem}\label{t3}
Let $(X_1,\dots, X_m)$ be a sequence of independent  normally 
distributed random varibles with mean $\mu$ and variance 1.
Let $Z_{i,j,k}$ be defined as in \eq{LR}.
We have for $b\to \infty$ and $m\asymp b^2$,
\besn{\label{Seq}
&{\p} \{ \max_{0 < j < k \leq m} |Z_{0,j,k}| \geq b \} \\
\sim & \frac{1}{2} b^3 \varphi(b) 
\sum_{1 < k \leq m } \sum_{0 < j < k}
j^{-2}  \nu[b(((k-j)/(jk) )^{1/2}] \nu[b(k/(j(k-j)))^{1/2}].
}
\end{theorem}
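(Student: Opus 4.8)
The plan is to follow the localization and change-of-measure scheme used to prove Theorem~\ref{t2}, specialized to the two-parameter Gaussian field obtained by pinning $i=0$. First I would reduce to a one-sided maximum: since the $X_i$ are symmetric about their common mean, the fields $\{Z_{0,j,k}\}$ and $\{-Z_{0,j,k}\}$ have the same law, while the event that both $\max_{j,k}Z_{0,j,k}\ge b$ and $\min_{j,k}Z_{0,j,k}\le -b$ has probability negligible relative to the bound in \eq{Seq}. Hence
\be{
\p\{\max_{0<j<k\le m}|Z_{0,j,k}|\ge b\}\sim 2\,\p\{\max_{0<j<k\le m}Z_{0,j,k}\ge b\},
}
and it suffices to approximate the one-sided tail of the centered, unit-variance field $\{Z_{0,j,k}\}$ indexed by $(j,k)$, equivalently by the left and right segment lengths $u=j$ and $v=k-j$.

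Next, for each index $(j,k)$ I would introduce the exponentially tilted measure under which $Z_{0,j,k}$ has mean $b$; under this measure the global exceedance event concentrates in a neighborhood of $(j,k)$, and summing the corresponding likelihood-ratio identity over all $(j,k)$ gives
\be{
\p\{\max_{0<j<k\le m}Z_{0,j,k}\ge b\}\approx \sum_{(j,k)}\p\{Z_{0,j,k}\ge b\}\,R_{j,k},
}
where $\p\{Z_{0,j,k}\ge b\}\sim\varphi(b)/b$ supplies the Gaussian tail and $R_{j,k}$ is a local correction determined by the increments of the field near $(j,k)$. The heart of the argument is to show that, after tilting, the one-step increments of $Z_{0,j,k}$ in the $j$-direction and in the $k$-direction decouple asymptotically into two independent Gaussian random walks, so that $R_{j,k}$ factors into a product of two one-dimensional ladder-height corrections, each equal to $\nu$ evaluated at $b$ times the standard deviation of the relevant one-step increment. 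Computing those increment variances produces the arguments $b[(k-j)/(jk)]^{1/2}$ and $b[k/(j(k-j))]^{1/2}$. Because $i$ is fixed at the left endpoint there is no third free direction, so only two $\nu$ factors appear, in contrast to the three in Theorem~\ref{t2}. The variance normalization of the local field then contributes the algebraic weight $j^{-2}$, and combining the marginal tail $\varphi(b)/b$ with the two one-dimensional factors (each of order $b^2$, carrying $O(1)$ localization constants) and the factor $2$ from the two-sided reduction yields the prefactor $\tfrac12 b^3\varphi(b)$ and the summand in \eq{Seq}.

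The step I expect to be the main obstacle is the uniform control of both approximations across the whole index set, and in particular at its boundaries. The asymptotic independence of the $j$- and $k$-direction increments must be justified by bounding the off-diagonal entries of the local covariance and showing they vanish in the regime $m\asymp b^2$; this is exactly what licenses the factorization of $R_{j,k}$ into a product of two $\nu$'s. Equally delicate is the behavior when $j$ is small, when $k-j$ is small, or when $k$ is near $m$, where the random-walk approximation of the local field degrades and the $\nu$ arguments are no longer $O(1)$, so one must check that these regions either contribute negligibly or are still captured by the same formula. Finally, since $i=0$ lies on the boundary of the three-dimensional index set of Theorem~\ref{t2}, I would verify that pinning it genuinely removes one $\nu$ factor together with one factor of $b^2$, rather than merely truncating the triple sum, and that the associated boundary effect is correctly accounted for in the constant $\tfrac12$.
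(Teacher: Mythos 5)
Your plan follows essentially the same route as the paper: Theorem \ref{t3} is proved there by repeating the argument for Theorem \ref{t2} (sum over the location of the maximum, conditioning on $Z_{0,j,k}=b$, localization to within $C\log b$ of $(j,k)$, convergence of the local increments to independent Gaussian random walks with negative drift whose staying-below-zero probabilities supply the $\nu$ factors via Corollary 8.44 of \cite{Si85}, and a final factor of $2$ for the two-sided maximum), and your tilting formulation leads to the same local computation, with two $\nu$ factors instead of three because $i$ is pinned at $0$.

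However, the step where you ``compute those increment variances'' contains a concrete error, and carried out literally it would not produce the arguments you claim. For the pinned field one finds
\[
\mathrm{corr}\bigl(Z_{0,j,k},Z_{0,j+n,k}\bigr)\approx 1-\frac{|n|}{2}\cdot\frac{k}{j(k-j)},
\qquad
\mathrm{corr}\bigl(Z_{0,j,k},Z_{0,j,k+n}\bigr)\approx 1-\frac{|n|}{2}\cdot\frac{j}{k(k-j)},
\]
so the one-step increment standard deviations are $[k/(j(k-j))]^{1/2}$ in the $j$-direction and $[j/(k(k-j))]^{1/2}$ in the $k$-direction; the quantity $b[(k-j)/(jk)]^{1/2}$ that you assign to one of the two free directions is in fact the increment scale of the frozen $i$-direction. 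Consequently the honest local computation yields the summand
\[
(k-j)^{-2}\,\nu\bigl[b\,(j/(k(k-j)))^{1/2}\bigr]\,\nu\bigl[b\,(k/(j(k-j)))^{1/2}\bigr],
\]
with algebraic weight $(k-j)^{-2}$, not $j^{-2}$. Your final formula is nevertheless equal to the theorem's, because for each fixed $k$ the reflection $j\mapsto k-j$ carries this summand onto $j^{-2}\nu[b((k-j)/(jk))^{1/2}]\nu[b(k/(j(k-j)))^{1/2}]$ (the factor $\nu[b(k/(j(k-j)))^{1/2}]$ is invariant under the reflection), so the two sums coincide. The gap is therefore reparable, but as written your derivation asserts increment variances the field does not have; without invoking this reflection symmetry you would find your correctly computed expression apparently disagreeing with \eq{Seq}, and the attribution of the two $\nu$ factors to the $j$- and $k$-directions needs to be corrected accordingly.
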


Compared to the segmentation procedure using the LLR,   
the pseudo-sequential procedure has
the advantage that it is easier and faster to implement.
However it does not have as strong a theoretical guarantee that
the false positive error probabilities are controlled.
Moreover, since SLLR only uses a subset of the actual background of a 
change-point, 
the location of the change-point and the
magnitude of a change may not be as accurately estimated
as for LLR.
However, as simulations and examples below
suggest, SLLR is quite stable and efficient.

The statistic suggested by \cite{NZ12}  is
similar to LLR, but uses a background that is symmetric around
a putative change-point.  Consider
the local maxima with respect to $j$ of
\begin{equation}  \label{NZ}
Z_{j,h} = |[(S_{j + h} - S_j) - (S_j - S_{j - h})]|/(2h)^{1/2},
\end{equation}
where $h$ is a parameter to be chosen.
Since there is no obvious choice for $h$,
Niu and Zhang suggest maximizing
\eq{NZ} over a finite number of values of $h$.  For their applications to
copy number variation,  they suggest 3 values,
10, 20, and 30. To complete their method, which 
they call SaRa,  they use a model selection 
procedure following their use of \eq{NZ}, a step that we omit.

The methods of proof of Theorems 2.1 and 2.2  use the fact that
local perturbations of the processes around a high maximum,
in particular, the difference between a large value of $Z_{i,j,k}$ 
and values $Z_{i',j',k'}$ as a function of $i' \approx i$,
$j' \approx j$, and $k' \approx k$ involves a sum of three 
approximately independent
random walks.
Since the local random walks obtained from perturbations of $j$ and $h$
for \eq{NZ}  are not independent, we cannot apply the 
same methods to obtain a theoretical approximation to the false positive
error probability.  
The increments are weakly positively dependent, so it seems 
natural to conjecture 
that treating them as if they were independent would produce a 
slightly conservative approximation.

An approximation to the  
the maximum over $j$ and $h$ of \eq{NZ}, calculated on the assumption
that the local increments obtained from perturbations of $j$ and $h$
are independent is given by
\[{\p} \{ \max_{0 < t < m, \; 0 < h < \min(t,m-t) } |Z_{t,h}| \geq b \}
\]
\begin{equation} \label{NZalpha}
\sim 1.5 m b^3 \varphi(b) \sum_h \nu[b(3/h)^{1/2}] \nu[b (1/h)^{1/2} ]/h^2.
\end{equation}
Simulations indicate that \ref{NZalpha} is slightly conservative, as 
expected, so we have used simulated thresholds in comparing it to
other procedures in Section 4. 
Because of the restriction to a symmetric background, SaRa  
can suffer a serious loss of power when change-points are spaced 
irregularly, with some being close to others.  

Table 1 compares simulated values for the maximum of LLR 
with the approximation given in Theorem 2.1 
for various values of $m$
(i) when the maximum is constrained by $m_0\leq j-i, k-j\leq m_1$ and 
(ii) for the related Poisson approximation that is appropriate
when $m$ is so large compared to $b^2$ that the tail probability is 
not small. 
Some of the thresholds will be used in comparing different methods in Section 4.

\begin{table}[h]
\caption{Approximation \eq{2.1}.
Simulated values
based on $N= 10000$ repetitions in the first three rows, 1000 in the last two 
rows and 2000 otherwise.
}
\begin{center}
\begin{tabular}{c|c|c|c|c|c}
\hline
$b$ & m & $m_0$ & $m_1$ & $p_{\rm Approx}$ &Monte Carlo  \\\hline
3.64 & 25 & 1 & 24 & 0.050 & 0.052 \\
4.00 & 50 & 1 & 49 & 0.050 & 0.049 \\
4.30 & 100 & 1 & 99 & 0.049 & 0.046 \\
4.54 & 200 &1 &199 & 0.049 & 0.049 \\
4.68 & 300 & 1 & 299 & 0.048 & 0.049 \\
4.76 & 400 & 1 & 399 & 0.049 & 0.048  \\
4.83 & 500 & 1 & 499 & 0.049 & 0.047 \\
4.83 & 500 & 1 & 100 & 0.043 & 0.042 \\
4.83 & 500 & 1 & 50 & 0.034 & 0.035 \\
4.71 & 500 & 1 & 50 & 0.056 & 0.053  \\
4.60 & 500 & 1 & 100& 0.109 & 0.103  \\
4.77 & 500 & 1 & 100 & 0.056 & 0.054  \\
4.71 & 500 & 3 & 100 & 0.054 & 0.043 \\
4.45 & 500 & 3 & 50 & 0.117& 0.108 \\
5.17 & 2000 & 1 & 1000 & 0.054 & 0.042 \\
4.99 & 1000 & 1 & 300 & 0.053 & 0.046 \\
4.40 & 1000 & 1 & 300 & 0.45& 0.41 \\
4.30 & 1000 & 1 & 300 & 0.58 &  0.51     \\ \hline
        \end{tabular}

        \label{table:DD}
\end{center}
\end{table}

Some numerical experimentation, not reported here in detail,
suggests that the approximation of Theorem 2.2 is also reasonably accurate.
For example, for  $m = 500$, the threshold $b = 4.34$
yields the probability 0.051, while
simulations (2500) repetitions give the probability 0.045.

In Section 4 we compare the methods described above with 
the widely applied method of 
\cite{OlVe04} and a threshold version of 
the method of \cite{Mu14}.  For completeness we give 
appropriate  approximations for their false positive control. 
Consider
\begin{equation} \label{eq:lr0}
\max_{0 \leq j < j+n \leq m} Z_{j,n},
\end{equation}
where
\begin{equation} \label{eq:lr2}
Z_{j,n} = \frac{|S_{j+n} - S_j - nS_m/m|}{[n(1-n/m)]^{1/2}} - \{2 \kappa \log[3m/n(1-n/m)]\}^{1/2}.
\end{equation}
The case $\kappa = 0$ is 
called CBS (circular binary segmentation) and
was suggested in \cite{OlVe04}, where it was applied to
copy number data. It is the likelihood ratio statistic for the
case that there exists a pair of changes, where the second change
is equal in magnitude but opposite in sign to the first change. 
The case $\kappa = 1$ is the multiscale statistic of \cite{Mu14}
(which we call Multi),  
who argued that
the CBS puts relatively too much power into
the detection of
short intervals of large amplitude at the cost
of considerably less power to detect
relatively long intervals of small amplitude.

An approximation for the false positive probability of \eq{eq:lr2}
stated here for the case of  $d$-dimensional $X_i$ 
with covariance matrix $\Sigma$ (cf. Remark (iii) following the
statement of Theorem 2.1), is given by
\besn{\label{2*}
&{\p} \Big\{ \max_{0\leq j<j +n\leq m \atop m_0\leq n\leq m_1}\big\{ \frac{|\Sigma^{-1/2}(S_{j+n} - S_j - nS_m/m)|}{[n(1-n/m)]^{1/2}} - \{2 \kappa \log[3m/n(1-n/m)]\}^{1/2} \Big\}\geq b \big\}\\
&\sim 2 \sum_{n=m_0}^{m_1}(m-n) f_d\big(b_n^2\big) \left( \frac{b_n^4q_n^3}{(2n(1-n/m))^2}\right)
\nu^2\left(\frac{b_n q_n}{[n(1-n/m)]^{1/2}} \right),
}
where $f_d$  denotes the chi-square probability density function
with $d$ degrees of freedom,
$b_n=b+\{2 \kappa \log[3m/n(1-n/m)]\}^{1/2}$,
and $q_n=1-(d-1)/b_n^2$.
The derivation of \eq{2*} is similar to that of Theorem 2.1 for $d=1$,
modified as suggested in the proof of (5) of \cite{ZhLJS2010} for $d>1$.

The method of proof of Theorem 2.1 appears to be applicable 
to some sparse interval systems considered in the literature, although
theoretical or numerical justification for the approximations 
we can formally obtain requires investigation.
For example, consider the sparse interval system (2.3) of Chan and 
Chen (2017) with $T=m, h=m, r=1+\epsilon$ with small $\epsilon>0$.
A modification of the calculations used in the proof of our Theorem 2.1 
(given in Appendix A) produces the approximation
\bes{
&\P \{\max_{0\leq i<j<k\leq m \atop j-i, k-j\in \{\lfloor (1+\epsilon)^a \rfloor: a\in \mathbb{Z}^+\}} |Z_{i,j,k}|\geq b\} \\
\sim & \frac{b^6 \Phi(-b)}{4}\sum_{u,v\in \{1,\dots, m\}, u+v\leq m \atop u,v \in \{\lfloor (1+\epsilon)^a \rfloor: a\in \mathbb{Z}^+\}}
(m-u-v)\prod_{i=1}^3 (4d_i^2)\nu(2d_i),
}
where
\be{
d_1=\frac{b}{2} \Big[\frac{(\lfloor \epsilon u\rfloor \vee 1)v}{u(u+v)}\Big]^{1/2},
}
\be{
d_2=
\begin{cases}
\frac{b}{2}\Big[\frac{1}{u}+\frac{1}{v}\Big]^{1/2} & \text{if}\  (\lfloor \epsilon v\rfloor \vee 1)=(\lfloor \epsilon u\rfloor \vee 1)=1\\
\frac{b}{2}\Big[  \frac{2(u^2+v^2+uv)}{uv(u+v)}   \Big]^{1/2} & \text{otherwise},
\end{cases}
}
\be{
d_3=\frac{b}{2} \Big[\frac{(\lfloor \epsilon v\rfloor \vee 1)u}{v(u+v)}\Big]^{1/2}.
}
Here $d_1$ and $d_3$ can be interpreted as the standardized drifts of the 
local random walks obtained by perturbing $i$ and $k$, respectively,
by the amounts of $(\lfloor \epsilon u\rfloor \vee 1)\times \mathbb{Z}^+$,
while $d_2$ corresponds to perturbing $j$ in one case, and shifting 
$(i,j,k)$ in the other case.
For $m=500, b=4.83, \epsilon=0.1$, this approximation 
gives $0.041$ and simulation based on 2000 
repetitions gives 0.040.
For $m=1000, b=5.1, \epsilon=0.1$, the approximation gives $0.028$ and simulation 
based on 2000 repetitions give $0.023$.
Similar approximations can also be obtained if we allow the threshold $b$ to depend on $i,j,k$ as in Chan and Chen (2017).

\section{Confidence Regions and Local Power}

We continue to assume independent normal observations
$X_1,\dots, X_m$ with mean
values forming a step function with jumps at $\tau_k, \; 1 \leq k \leq M$ and 
variance equal to one.  
For a given value of $M$, we can use the likelihood ratio statistic to construct a 
joint confidence region for the change-points $\tau=(\tau_1,\dots,\tau_M)$  or
for the change-points and mean values $\mu=(\mu_1,\dots, \mu_{M+1})$.

We use the inverse relation between confidence intervals and hypothesis tests.
For testing a putative value of the positions of change-points and the corresponding mean 
values, the maximum log likelihood ratio statistic is
\begin{equation} \label{1}
\begin{split}
T_{\tau,\mu}=&\max_{0< t_1<\dots <t_M<m} \sum_{k=1}^{M+1} \frac{(S_{t_k}-S_{t_{k-1}})^2}{2(t_k-t_{k-1})}
-\sum_{k=1}^{M+1} \left[\mu_k(S_{\tau_k}-S_{\tau_{k-1}})-\frac{\mu_k^2}{2}(\tau_k-\tau_{k-1})\right]\\
=:&
\max_{0< t_1<\dots <t_M<m} U_{t,\tau,\mu},
\end{split}
\end{equation}
$t=(t_1,\dots, t_M)$, $t_0=\tau_0=0, t_{M+1}=\tau_{M+1}=m$, 
and $S_i=\sum_{j=1}^i X_j$ for $0\leq i\leq m$.
The $1-\alpha$ confidence region consists of those $\tau$ and $\mu$ 
such that $T_{\tau,\mu}\leq a_{\tau,\mu}$ where
\begin{equation} \label{2}
\p_{\tau,\mu}(T_{\tau,\mu}>a_{\tau,\mu})=\alpha.
\end{equation}
If we are only interested in the confidence region of $\tau$ and treat $\mu$ as a nuisance parameter, the maximum log likelihood ratio statistic is
\begin{equation}\label{3}
T_\tau=\max_{t_1,\dots, t_M} \sum_{k=1}^{M+1} \frac{(S_{t_k}-S_{t_{k-1}})^2}{2(t_k-t_{k-1})}
-\sum_{k=1}^{M+1} \frac{(S_{\tau_k}-S_{\tau_{k-1}})^2}{2(\tau_k-\tau_{k-1})}.
\end{equation}
By sufficiency the conditional distribution of $T_\tau$ given $\{S_{\tau_k}: 1\leq k\leq M+1\}$ does not depend 
on $\mu$. Therefore, a $1-\alpha$ confidence set for the change-points is 
the set of $\tau$ such that $T_\tau\leq a_{\tau, S_{\tau_1},\dots, S_{\tau_M}}$ where
\begin{equation}\label{4}
\p_{\tau}(T_\tau> a_{\tau, S_{\tau_1},\dots, S_{\tau_M}}|\tau, S_{\tau_1},\dots, S_{\tau_M})=\alpha.
\end{equation}

In the case there is known to be only one change-point, i.e., $M=1$, for 
exponentially distributed random 
variables, the exact value of
the left-hand side of (\ref{4}) was given by \cite{Wo86}. 
For $M=1$, asymptotic approximations for the left-hand side of both
(\ref{2}) and (\ref{4}) were given by \cite{Si88} for 
distributions from exponential families.
Since the asymptotic approximations in \cite{Si88} seem difficult to generalize to the case where $M\geq 2$, here we use a different approach to obtain asymptotic approximations for the left-hand side of both (\ref{2}) and
(\ref{4}) for $M\geq 1$.


\subsection{Tail approximations}

To construct the joint confidence region for the change-points and the
corresponding parameters, for each $\tau$ and $\mu$, we need to find $a_{\tau,\mu}$ such
that $$\p_{\tau, \mu} (T_{\tau,\mu}>a_{\tau,\mu})=\alpha$$ 
where $T_{\tau,\mu}$ is defined in (\ref{1}). The following theorem, the
proof of which is deferred to Appendix B,  
gives an approximation to
$$\p_{\tau, \mu} (T_{\tau,\mu}>a)$$
for large $a$.
We assume that the putative change-points are close enough to the 
true change-points that the maximum can be taken over relatively 
small neighborhoods ($|t_k-\tau_k|\leq n_k$) of the putative 
change-points, i.e.,
$$\p_{\tau, \mu} (T_{\tau,\mu}>a)\sim \p_{\tau, \mu}(\max_{t: |t_k-\tau_k|\leq n_k}U_{t,\tau,\mu}>a).$$
The assumption \eq{5} imposed in the theorem also 
ensure that the change-points are 
reasonably well separated from one another.
Despite these technical assumptions, simulation shows that our approximation is reasonably accurate (cf. Tables \ref{table9} and \ref{table10}).

\begin{theorem}\label{t4}
Let $\tau=\{\tau_1,\dots, \tau_M\}$ and $\mu=\{\mu_1,\dots, \mu_{M+1}\}$ be
defined as above.
Define $\delta_k=\mu_{k+1}-\mu_k$ for $1\leq k\leq M$ and $m_k=\tau_k-\tau_{k-1}$ for $1\leq k\leq M+1$.
Suppose that $|\delta_k|\asymp 1$ and
\begin{equation}\label{5}
1\ll a \ll n_k \ll (m_k \wedge m_{k+1})/a,
\end{equation}
where $A\ll B$ means $A/B\to 0$.
We have
\begin{equation} \label{6}
\p_{\tau, \mu}(\max_{t: |t_k-\tau_k|\leq n_k}U_{t,\tau,\mu}>a)\sim \p(\sum_{k=1}^M W_k+\frac{1}{2}\chi^2_{M+1}>a),
\end{equation}
where $U_{t,\tau,\mu}$ was defined in \eq{1}, $W_1,\dots,W_M, \chi^2_{M+1}$ are independent, $\chi^2_{M+1}$ is a chi-squared
random variable with $M+1$ degrees of freedom, and for $1 \leq k \leq M$ the
distribution of $W_k$ is given by 
\begin{equation}\label{51}
\p(W_k> x)=2\nu(|\delta_k|)e^{-x}-\nu^2(|\delta_k|)e^{-2x}, \ \forall \ x\geq 0
\end{equation}
for $1\leq k\leq M$.
\end{theorem}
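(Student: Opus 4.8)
The plan is to reduce the restricted maximum of $U_{t,\tau,\mu}$ to an exact ``base'' term plus $M$ asymptotically independent local contributions, one from the window $|t_k-\tau_k|\le n_k$ around each change-point (the reduction to these windows being already built into the left-hand side of \eq{6}). First I would evaluate $U$ at the true configuration $t=\tau$. Writing $Y_k=S_{\tau_k}-S_{\tau_{k-1}}$, a direct computation collapses the two sums in \eq{1} into $U_{\tau,\tau,\mu}=\sum_{k=1}^{M+1}(Y_k-\mu_k m_k)^2/(2m_k)=\tfrac12\sum_{k=1}^{M+1}Z_k^2$, where $Z_k=(Y_k-\mu_k m_k)/\sqrt{m_k}$ are i.i.d.\ standard normal under $\p_{\tau,\mu}$. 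This produces the $\tfrac12\chi^2_{M+1}$ term \emph{exactly}.

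Next I would analyze the gain from perturbing a single coordinate. Shifting $t_k=\tau_k+s$ changes only the $k$-th and $(k+1)$-th summands of the first sum in \eq{1}; denoting $R_s=S_{\tau_k+s}-S_{\tau_k}$ and using $|s|\le n_k$ together with the rightmost inequality in \eq{5}, I expand the two affected quadratic terms in powers of $s/m_k$ and $s/m_{k+1}$. The leading contribution is, for $s>0$, $g_k(s)\approx-\delta_k r_s-\tfrac12\delta_k^2 s$, where $r_s=\sum_{i=\tau_k+1}^{\tau_k+s}(X_i-\mu_{k+1})$; this is a Gaussian random walk with increment law $N(-\delta_k^2/2,\delta_k^2)$, so that $\exp(g_k(s))$ is a martingale. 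A symmetric computation for $s<0$ gives a second such walk built from the centred observations immediately to the \emph{left} of $\tau_k$. Because $1\ll b\ll n_k$, the restriction $|s|\le n_k$ may be replaced by $s\in\mathbb Z$ with negligible error (both walks have negative drift); and because $n_k\ll(m_k\wedge m_{k+1})/b$, the two walks at $\tau_k$, as well as the walks belonging to distinct change-points, are built from disjoint blocks of observations and are thus asymptotically independent of one another and of the $Z_j$, while the cross terms coupling adjacent perturbations are lower order by the same spacing bound. I would also check that the remainders dropped in the expansion are uniformly $o(1)$ over the maximization window.

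The local maximum then decouples: $\max_{|t_k-\tau_k|\le n_k}U_{t,\tau,\mu}-U_{\tau,\tau,\mu}\approx\sum_{k=1}^M\max\{0,\max_{s>0}W_k^+(s),\max_{u>0}W_k^-(u)\}=:\sum_{k=1}^M W_k$, with $W_k=\max(M_k^+,M_k^-)$ and $M_k^\pm$ the suprema of the two independent negatively-drifting walks. The distribution of a single supremum is the crux: since the exponential of each walk is a martingale, optional stopping at the first-passage time of level $x$ gives $\p(M_k^+>x)=e^{-x}/\E[e^{\,\mathrm{overshoot}}\mid M_k^+>x]$, and the renewal theorem identifies the limiting overshoot correction, as $x\to\infty$, with exactly $\nu(|\delta_k|)$ (this is the defining role of $\nu$ in \cite{SiYa07}). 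Hence $\p(M_k^+>x)\sim\nu(|\delta_k|)e^{-x}$, and by the independence and common law of $M_k^+,M_k^-$ we get $\p(W_k\le x)=(1-\nu(|\delta_k|)e^{-x})^2$, which is \eq{51}. Assembling the exact base term, the $M$ independent $W_k$, and their mutual independence yields \eq{6}.

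The main obstacle I anticipate is the third step: controlling the remainder of the quadratic expansion \emph{uniformly} in $s$ over the whole window $|s|\le n_k$, so that the random-walk approximation survives under the maximum rather than merely pointwise, and pinning the overshoot constant down as precisely $\nu(|\delta_k|)$ rather than some unidentified $O(1)$ prefactor. The latter is exactly where $|\delta_k|\asymp1$ and $1\ll b\ll n_k$ are essential: they force the relevant crossing levels to be large, so that the renewal-theoretic limit of the overshoot applies and the clean exponential tail in \eq{51} becomes exact in the limit.
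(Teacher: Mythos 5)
Your proposal follows essentially the same route as the paper's Appendix B proof: the exact $\tfrac{1}{2}\chi^2_{M+1}$ base term at $t=\tau$, Taylor expansion of single-coordinate perturbations into negative-drift Gaussian random walks with increments $N(-\delta_k^2/2,\delta_k^2)$, identification of the supremum tail as $\nu(|\delta_k|)e^{-x}$ by overshoot/renewal arguments (the content of the Siegmund (1985) results the paper cites), the max of the two one-sided suprema giving the stated law of $W_k$, and asymptotic independence from the spacing condition $1\ll b\ll n_k\ll (m_k\wedge m_{k+1})/b$. The technical obstacles you flag---uniform control of the expansion remainder, asymptotic independence of the local walks from the segment sums entering the $\chi^2$ term, and forcing the relevant crossing levels to be large so that the renewal-theoretic tail may be used inside the convolution---are precisely what the paper's Claims 1--4 establish, via conditional limit theorems and the choice of intermediate levels $1\ll z\ll\log y_1\ll\log\log b$.
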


We have a similar approximation for the left-hand side of \eq{4}.

\begin{theorem}\label{t5}
Let $T_{\tau}'$ be defined as in (\ref{3}) with the maximum taken over $|t_k-\tau_k|\leq n_k$ for $1\leq k\leq M$.
Define $\hat{\delta}_k=\hat{\mu}_{k+1}-\hat{\mu}_{k}$ for $1\leq k\leq M$, $\hat{\mu}_k=(s_{\tau_k}-s_{\tau_{k-1}})/(\tau_k-\tau_{k-1})$ and $m_k=\tau_k-\tau_{k-1}$ for $1\leq k\leq M+1$.
Suppose that $|\hat{\delta}_k|\asymp 1$ and
\begin{equation*}
1\ll a \ll n_k \ll (m_k \wedge m_{k+1})/a.
\end{equation*}
We have
\begin{equation}\label{201}
\p_\tau(T_\tau'> a|S_{\tau_1}=s_{\tau_1},\dots, S_{\tau_M}=s_{\tau_M})\sim \p(\sum_{k=1}^M W_k>a)
\end{equation}
where $W_1,\dots,W_M$ are independent and have the
same distributions as in Theorem \ref{t4} with $\delta_k$ replaced by $\hat{\delta}_k$.
\end{theorem}


It is easy to evaluate the distributions of $\sum W_k$ and $\sum W_k + \chi^2_{M+1}$ by
Fourier inversion, for values of $M$ up to about 100, and by asymptotic methods
in the rare case that still larger values of $M$ are of interest.  
We start from the standard inversion formula for a probability density $f$
function with an integrable 
characteristic function $\hat{f}$:
\[ f(x) = (2 \pi)^{-1} \int_{- \infty}^{\infty} \exp(-\sqrt{-1} \lambda x) \hat{f}(\lambda) 
d\lambda.
\]
For a distributions on the non-negative numbers we integrate
this from 0 to $b$ to find that probability to the left of $a$ equals
\[ \pi^{-1} \int_0^{\infty} {\bf Re}\{[1-\exp(-\sqrt{-1} \lambda a)] \hat{f}(\lambda)\} d\lambda/\lambda. 
\]
For our special case, for simplicity assume that
$\delta_k = \delta$ for all $k$.   Let $\nu = \nu(\delta)$ and $\hat{f} (\lambda) =
(1-\nu)^2 + 2 \nu/(1+ \sqrt{-1}\lambda) -2 \nu^2/(2 - \sqrt{-1}\lambda)$ denote the
characteristic function of $W_k$.  Let $\hat{g}(\lambda)$ be the characteristic
function of a $\chi^2_{M+1}$ random variable.  Finally, let
$h(\lambda) = \hat{f}^m(\lambda)*g(\lambda)[1-\exp(\sqrt{-1} \lambda a)]
/(1+\sqrt{-1} \lambda)$.
Then the probability on the right hand side of  (\ref{6}) equals
$ 1 - \int_0^{\infty} {\rm \bf Re}[ h(\lambda)] d\lambda/\pi.$
For Theorem 3.2 a similar expression without the
factor $h$ provides a numerical value for the
approximation.  

In Tables \ref{table9} and \ref{table101} we use simulations to check the accuracy of the approximations
of Theorems 3.1 and 3.2, respectively.
The number of change-points is $M=2$. The 
other parameters are indicated in the tables.
For different values of $\delta_1$ and $\delta_2$, we compute the threshold $a$
such that our approximation of the relevant probability equals 0.05.
The values $\hat{p}$ denotes a Monte Carlo estimate of the 
appropriate probability with $n_1=n_2=m$, based on
10000 repetitions each. We see that the
approximations are  reasonably accurate for the range $1 <|\delta|<2$.

\begin{table}[h]
\caption{Approximation \eq{6} for $M=2$. 
Simulated values based on $N=10000$ repetitions.
The values of $\hat{p}$ in parentheses correspond to those $\delta_2$ in parentheses. }
\label{table9}
\begin{center}
\begin{tabular}{c|c|c|c|c}
\hline
$m/\tau_1/\tau_2$ & $\delta_1$ & $\delta_2$ &  $a$ & $\hat{p}$ (Monte Carlo) \\
\hline
105/35/70 & 1.5 & 1.5\; (-1.5) & 6.55  & 0.052\; (0.055) \\
&2 & 2\; (-2) & 6.11 & 0.048 \; (0.043) \\
&2.25  & 2.25\; (-2.25) & 5.92  & 0.058\; (0.042) \\
&1.5   & 0.75 \; (-0.75) & 6.94  & 0.060\; (0.062) \\ \hline
210/70/140 & 0.75 & 0.75\; (-0.75) & 7.33 & 0.059\; (0.067)\\
& 1.5 & 1.5\; (-1.5) & 6.55 & 0.051\; (0.048) \\
& 2 & 2\; (-2) & 6.11 & 0.044 \; (0.046)\\
& 1.5 & 0.75\; (-0.75) & 6.94 & 0.055\; (0.060)\\ \hline
\end{tabular}
\end{center}
\end{table}

\begin{table}[h]
\caption{Approximation \eq{201} for $M=2$. 
Simulated values based on $N=10000$ repetitions.
The values of $\hat{p}$ in parentheses correspond to those $\hat{\delta}_2$ in parentheses.}
\label{table101}
\begin{center}
\begin{tabular}{c|c|c|c|c}
\hline
$m/\tau_1/\tau_2$ & $\hat{\delta}_1$ & $\hat{\delta}_2$ &  $a$ & $\hat{p}$ (Monte Carlo) \\
\hline
105/35/70 & 1.5  & 1.5\; (-1.5)  & 4.28 & 0.054\; (0.047) \\
& 2 & 2\; (-2) & 3.80 & 0.045\; (0.047)  \\
& 2.25 & 2.25 \;(-2.25) & 3.59 & 0.043\; (0.042)\\
& 1.5 & 0.75\; (-0.75) & 4.68 & 0.053\; (0.056)\\ \hline
210/70/140 & 0.75 & 0.75 \;(-0.75) & 5.09 & 0.055 \;(0.057)\\
& 1.5 & 1.5\;(-1.5) & 4.28  & 0.049 \;(0.047)  \\
& 2 & 2\; (-2) & 3.80 & 0.049 \; (0.047) \\
& 1.5 & 0.75 \;(-0.75) & 4.68  & 0.056 \;(0.055) \\ \hline
\end{tabular}
\end{center}
\end{table}

For a simple example of a confidence region for the
change-points, we simulated $m = 161$ observations with changes
in the mean value of size $\pm 2$ at observations 51, 91, and 121.  In
the first simulation $\hat{\delta} \approx 2$ for
all three change-points.  This value gave a threshold of
4.95 for a 95\% conditional confidence region.  The 
joint confidence region  consisted of 
the point estimators
51, 91, and either of 121 or 122.  In a second simulation with the
same parameters, the 
smallest estimate of $\hat{\delta}$ was 1.5, which 
if used for all three change-points would lead to a conservative threshold,
in this case equal to  5.6. 
The joint confidence region based on this threshold was substantially larger.
The union of the three regions was 
50, 51, 91, 92, 93, 121,122.  The joint confidence region consisted of
7 of the $2 \times  3 \times 2 = 12$ possible combinations of these 
values;  we omit the details.
When the size of the changes was decreased to $\pm 1.5$, we again used
the smallest value of $\hat{\delta}$, which again gave a 
threshold of 
5.6, and the 95\% joint confidence region extended up to 5 observations
away from the change-points at 51 and 91, and a couple of observations 
away from  121.  As a reflection of the fluctuations in the 
sample paths of the random walk, the regions around the individual
change-points were neither symmetric nor connected.

For applications to copy number variation, see Section 4.2.

\medskip
\noindent{\bf Remark.}  As one sees from an examination of the
conditions of Theorem 3.1 and Theorem 3.2, the methods discussed in this 
section work well if
the sizes of the changes and the distances between them are reasonably large.
If there is a mixture of large and small changes, or if it is unclear whether
a putative change is real or not, the procedure can be adapted appropriately.
For example, suppose we are interested in a joint confidence region for
the change-points when there are clear changes close to $\tau_1 < \tau_3$,
with what may or may not be a change at $\tau_2 \in (\tau_1, \tau_3)$.  
In taking the maximum indicated above, one can fix the value  $t_2 = \tau_2$ 
and maximize only over $t_1$ and $t_3$, while evaluating the conditional
probabilities given by all three $\tau_i$.  Whether there is a change at
$\tau_2$ or not, the conditional probability adapted from Theorem 3.2 now 
involves the sum of two conditionally independent maxima, not three.  
To be more 
conservative in protecting against a change-point near, but not exactly at,
$\tau_2$, we can bracket $\tau_2$ by, say $\tau_{20} < \tau_2 < \tau_{21}$,
and proceed from there.  The confidence coefficient is still asymptotically 
as given in Theorem 3.2, but the confidence region itself may have 
changed due to the change in the statistics used for conditioning.  
Presumably unnecessary conditioning  
leads to 
less accurate estimation.

\subsection{Power}
To help our intuition concerning the relation between background and
size of  a change that makes a particular change-point either easy or 
difficult to detect and to compare different procedures under hypothesized
conditions, it is helpful to have an approximation for the
power to detect a change.  

When the size of a change in the mean value is $\delta >0$ and the 
(largest possible) background
is $(i^*,k^*)$ for a change-point at $j^*$, we define 
the {\it marginal power} to be
\ben{\label{st1}
1-\Phi(b-\delta [h_1h_2/(h_1+h_2)]^{1/2}),
}
where $h_1 = j^* - i^*, \; h_2 = k^* - i^*.$
This is just the marginal probability that the statistic $Z_{i,j,k}$ evaluated
at the true change-point $j=j^*$ with the largest possible background $i=i^*, k=k^*$ exceeds the threshold $b$.
A detection may fail to occur at $i^*, j^*, k^*$, but occur 
at $i',j',k'$ which is a local perturbation of the
values $i^*, j^*, k^*$ in the sense that $i^*\leq i'<j'<k'\leq k^*$.
Using a similar argument as in the derivation of \eq{6} (cf. Appendix B),
we can approximate the probability of such a detection by
conditioning on $Z^2_{i^*, j^*, k^*}$ to obtain
\ben{\label{st2}
2 \int_0^{b^2/2} \p\{\sum_{i\in \{0,1,2\}} W_i > b^2/2 - x\} f(2x;1,\lambda) dx, 
}
where
$f(\cdot; 1,\lambda)$ is the probability density function of a $\chi^2$ distribution with one
degree of freedom and noncentrality parameter $\lambda = \delta^2 h_1h_2/(h_1 + h_2)$,
 $W_0, W_1, W_2$ are independent, $W_0$ is nonnegative and has the probability distribution
$\p\{ W_0 > x\} =  2 \nu(\Delta) \exp(-x) - \nu^2(\Delta) \exp(-2x)$ for $x \geq 0$ with 
$\Delta=b\sqrt{1/h_1+1/h_2}$,
and for $ i = 1,2$, $W_i$ is nonnegative and has the distribution given by
$\p\{ W_i > x\} =   \nu(\Delta_i) \exp(-x)$ for $x\geq 0$ with $\Delta_1 = \Delta/(1+h_1/h_2)$
and $\Delta_2 = \Delta/(1+h_2/h_1)$.
We use the term {\it local power}  
to denote the sum of the
marginal power \eq{st1} and the perturbation \eq{st2}.
Similar approximations can be obtained for the pseudo-sequential procedure, for
multidimensional statistics, and for multiscale statistics.
We omit the details.

\section{Simulations and Applications}

In this section we report the result of numerical exercises involving simulated and
real data to compare a number of
different segmentation procedures, with emphasis on their efficiency 
to detect change-points without an excessive number of  false positive errors.
We consider only thresholding algorithms that control the
false positive error rate under the global null hypothesis that there
are no change-points.  As we see below on the basis of
simulations that control is 
compromised to varying degrees
when iteration to find multiple change-points is required.

In contrast to LLR, SLLR and SaRa, 
both CBS and Multi are ``top down'' procedures, where
we begin by searching the entire interval of observations.
When one change-point (respectively, a
pair of change-points) is detected, the interval searched is divided into
two (respectively, three subintervals), and those subintervals are searched
for additional change-points.  
Since the methods are designed to detect change-points occurring in pairs,
under various conditions, e.g., when
there is only one change-point to be detected in a search interval,
or when consecutive changes are both positive or both negative,
one of the paired 
``detections'' often suggests a change-point very near to one end-point of
the search interval.  This is usually a false detection that is   
easy to recognize and disregard,
although the decision to disregard it has an element of subjectivity.  
To minimize
this subjectivity in our simulations, after some experimentation
we usually discard any detection having a distance to an end-point of
the interval searched that is within 5\% of the length of that 
interval. If both detections are within this distance, the one closer to
an end-point is discarded.  If they are equally distant from an end-point,
the one to be discarded is chosen at random.  While objective, this rule can
in some cases lead to 
errors, so in practice  we recommend making a subjective decision
based on a careful examination of the data.

Although the top down iterations of CBS and Multi make it 
natural to suspect that their false positive error control
may be inadequate, in most cases
this does not appear to be a 
major problem.
If a large interval is partitioned into smaller intervals by 
correctly detected change-points,
the false positive probability for CBS for the initial
interval is numerically very close to the sum of the
probabilities for the subintervals, so the sum of
the false positive probabilities for the small intervals is roughly the
same as that of the initial search.  For Multi, this sum is much less than
the false positive probability of the initial search (provided the 
value of $m$ is used for all searches, not changed to reflect the 
lengths of the different subintervals).
It appears that for both of these statistics the
main source of false positive errors arises, fortunately not often,  when  
a correct detection is paired with a false
detection that is not close enough to an endpoint to be excluded. 

It is also possible to give approximations for the local power for 
these two statistics, at least under the simplifying conditions that
we are at a stage of the search where there is only one or a pair of
change-points to be detected in the interval searched. 
For simplicity we consider only the CBS statistic
when either (i) the mean before the first change-point 
at $\tau_1$ equals $\mu_1$, between the first and second change-point
at $\tau_2$  equals $\mu_2$,
and returns to the value $\mu_1$ after $\tau_2$, or (ii) 
there is only one change-point at $\tau_1$ and $\tau_2 = m$.
Denote the magnitude of the change by $\delta$
and let $n_0 = \tau_2 - \tau_1$ denote the length of the changed interval.
Approximations and some calculus similar to that given in Section 3.2 lead to
\besn{\label{202}
&\p_\delta \left\{ \max_{0\leq i<j\leq m}  \frac{|S_j-S_i - (j-i)S_m/m|}{[(j-i)
(m-j+i)/m]^{1/2}} \geq b  \right\}\\
\approx & \Phi ( \delta [n_0(1-n_0/m)]^{1/2}-b )
+2\int_0^{b^2/2} \p(W_3+W_4\geq b^2/2-x) f(2x;1,\delta^2 n_0(1-n_0/m)) dx,
}
where $f(\cdot; 1, \lambda)$ is the density function of the chi-squared distribution with 1
degree of freedom  and noncentrality parameter $\lambda$, and
$W_3, W_4$ are independent  
nonnegative random variables similar to those appearing 
in the approximation for the local power of LLR
(cf. \ref{st2}).  If $\tau_2 < m$, both have
the distribution given by 
$\P(W_3>x)=2\nu (\Delta) \exp(-x)-\nu^2(\Delta)\exp(-2x)$
for $x \geq 0$, where $\Delta=b/\sqrt{n_0(1-n_0/m)}$.  If
$\tau_2 = m$, the right hand tail of the
distribution of  $W_4$ equals $\Delta \exp(-x)$.
Similar results
hold for the power of Multi and of SLLR.  

We do not consider in detail other 
top down iterative thresholding procedures that appear to have poorly 
controlled false positive
error rates if iterated with the same threshold and poor power under 
easily understood conditions.
One is the classical binary segmentation procedure of
\cite{Vo81}, which has a false positive  error probability that builds up
very quickly with the number of iterations required.  For example, suppose that
we use the  threshold of $b = 3.0$, which for $m = 300$ gives a global 
false positive probability
of approximately 0.05 on the initial search, and assume that we correctly detect the
four change-points of the first example in Table \ref{tableexamples}.  
Then the sum of the
false positive probabilities searching for a fifth change-point in the 
intervals between those already detected is about 0.126, 
and it would be larger if more iterations are required.  
There are various ways to adjust the thresholds to maintain control
of the false positive probabilities,
but this statistic has very poor power under conditions where
other statistics have no problems.  If
there is an increase (decrease) in the mean 
value followed by a decrease (increase) of a similar magnitude.
the statistic tends to average the two changes and detect neither.
For similar reasons we also
have omitted the thresholding procedure 
suggested by Aston and Kirch (2012), which
is similar to CBS and Multi in the sense that it searches
for a complementary pair of change-points; but the statistic 
$S_j - S_i - (j-i)S_m/m$ is not standardized to obtain a  
statistic having a marginal 
distribution with unit variance.  Its false 
positive probability is poorly controlled when the procedure is iterated
with a fixed threshold, and it  
also has  
little power to detect two change-points that move
in opposite directions. 
These up-down (or down-up) pairs occur frequently in the data
motivating our studies, although the difficulties they pose may not be 
regarded a 
serious liability in other scientific contexts, where such changes 
may be regarded as unlikely. 

\subsection{Simulations}
The first example in Table \ref{tableexamples} is
a modified version of a suggestion of (\cite{OlVe04}), which
those authors said was typical of the copy number data that
motivated their study.   There are three hundred observations and 
four change-points at 138, 199, 208, and 232, with
mean values 0.0, 0.75, 2.5, 0.25, and 1.5 in the five gaps
between change-points.
According to the local power
approximation of the preceding section, LLR has
local power 0.77, 0.73, 0.91, and 0.85, respectively, to detect these
change-points, so
its expected number of
change-points detected is 3.3.  As a reflection of its lower 
threshold SLLR has an expected number of 3.45 detections, 
although,
as we remarked above and see empirically in Table 6 below, it also has a larger rate of 
false positives.  Simulations not reported here indicate  
that these approximate expected values are quite accurate. 

The second example in the table has changes of the same magnitude in the same locations, 
but with all changes in a positive direction.  The results are similar
in spite of the fact that both CBS and Multi are not designed with 
this case in mind.  The third case is qualitatively similar to the first one,
but it contains one very short up-down pair of change-points. 
In this case the expected number of change-points detected by  LLR  is predicted  by our local power
approximation to be 3.4. 

Failure to detect a change-point is marked in the
table  by a zero(0),
and false positives by an asterisk(*).

\medskip
In Table \ref{tableexamples} our implementation of both LLR and SaRa  
was to choose the values 
$j$ by minimizing the associated length of the background $k-i$ 
from among those values of $|Z_{i,j,k}|$ exceeding the threshold.
If necessary, we enforced the condition mentioned above that the
backgrounds not overlap. 
The other possibility mentioned above, to choose the largest value
of the statistic, but then enforce the no overlap condition for the
background values, frequently leads to more
computation but only occasionally leads to a substantial difference in the segmentation.  
For determining joint confidence regions for the change-points, 
we must condition on estimates of the sizes of 
the changes in mean, and
for that purpose the locally largest $|Z|$-values may be more useful, since it is 
usually based on a longer background, hence a larger ``effective sample size.''

\begin{table}[htp]
\caption{\label{tableexamples} Examples of segmentations: $m = 300, \; b_{\rm LLR} = 4.68, \;  b_{\rm SLLR} = 4.21, \;
b_{\rm SaRa} = 4.27, \;
b_{\rm CBS} = 4.23,$ and $b_{\rm Multi} = 1.51.$  
The initial mean value is 0.  Locations of
change-points and mean values after the change are as indicated.} 
\begin{center}
\begin{tabular}{|c|c|c|c|c|}
\hline
Procedure/Parameters & 138, 0.75  & 199, 2.5 & 208, 0.25  & 232, 1.5  \\ \hline
LLR & 164   & 198 & 206  & 248  \\
SLLR & 134  & 197  & 206  & 248  \\
SaRa & 48*, 140  & 198  & 207  & 249  \\ 
CBS & 149  & 199  & 297  & 249 \\
Multi & 149  & 199 & 208 & 249 \\ \hline
LLR & 127  &198  &211  &230  \\
SLLR &127  &199  &209  &230  \\
SaRa  & 130   & 0  & 211  &230  \\ 
CBS  &135  &199  &212  &231  \\
Multi & 135 &199  &212  &231  \\ \hline
Procedure/Parameters & 138, 0.75 & 199, 2.5 & 208, 4.75 & 232, 6.0 \\ \hline
LLR & 145 & 198 & 207 & 234 \\
SLLR & 134 & 197 & 206 & 232 \\
SaRa  & 134 & 0 & 207 & 234 \\
CBS & 140 & 199 & 208 & 235 \\
Multi & 140 & 199 & 208 & 235 \\ \hline
LLR & 137 & 0 & 207 & 231 \\
SLLR & 136 & 197 & 206 & 235 \\
SaRa  & 137 & 199 & 208 & 235 \\
CBS & 138 & 0 & 207 & 236 \\
Multi & 138 & 0 & 207 & 236 \\ \hline
LLR & 159 & 198 & 207 & 231 \\
SLLR & 129 & 198 & 209 & 231 \\
SaRa & 131 & 198 & 0 & 231 \\
CBS & 130 & 199 & 208 & 232 \\
Multi & 130 & 199 & 208 & 232 \\ \hline
Procedure/Parameters & 100, 3.0 & 103, -0.5 & 120, 1.8 & 200, 2.5 \\ \hline
LLR & 97 & 102 & 119 & 199 \\
SLLR & 97 & 101 & 117 & 199 \\
SaRa & 0 & 0 & 119 & 200 \\
CBS & 98 & 103 & 120 & 200\\
Multi & 0 & 0 & 120 & 200 \\ \hline
LLR & 98 & 102 & 119 & 200 \\
SLLR & 96 & 101 & 118 & 199 \\
SaRa & 0 & 104 & 119 & 200 \\
CBS & 100 & 103 & 120 & 201 \\
Multi & 0 & 0 & 120 & 207 \\ \hline
LLR & 99 & 101 & 122 & 214 \\
SLLR & 98 & 0 & 117 & 214  \\
SaRa & 99 & 0 & 121 & 214  \\
CBS & 100 & 102 & 122 & 140*,215 \\
Multi &100 & 102 & 122 & 140*, 215  \\ \hline
\end{tabular}
\end{center}
\end{table}

Although the table contains only a few examples, several entries reinforce 
our intuition.  The procedure SaRa lacks power to detect
both of two nearby changes by virtue of its requirement to use a symmetric background.
Compared to SaRa, LLR appears to be better at detecting nearby 
change-points at some loss of power to detect relatively isolated 
change-points.  The procedure Multi fails to detect a short interval
that CBS detects---not surprising since its justification involved
an increase in power to detect longer intervals paid for by  a decrease
in power to detect very short intervals.  

In Table \ref{tableexamples}, we see only a few 
false positive errors. 
For CBS and Multi there is
a false positive error that 
occurred when searching an interval where there is only one true change-point
to be detected.  The statistics detect two, and the incorrect detection
is not eliminated by the 5\% rule described above.  
Other simulations suggest that
this is the most commonly occurring false positive error of those statistics.

As mentioned in Section 1, in studying CNV various authors starting 
with \cite{OlVe04} have found
technical artifacts in the form of local trends that tend to 
disrupt the idealized model
of a step function mean value.  The local trends appear to be 
affected primarily by 
CG content, which oscillates in a roughly sinusoidal fashion.  To test 
robustness against 
these perturbations \cite{OlVe04}
suggest adding a low frequency sinusoid, which produces some degradation of
performance.  In Table \ref{localtrends}, we report a very small 
simulation comparing LLR to CBS
in the presence of a sinusoidal perturbation of the mean values.   
In the first three rows, the  amplitude and frequency are larger than
those suggested by \cite{OlVe04}.  In the second three rows, the
amplitude is still larger, the frequency is relatively small, and a random 
phase has been included.  These and other simulations, not shown here, 
suggest that modest local trends lead to slight increases in  
the false positive rate of CBS (and Multi), but not  LLR,  and to slight
decreases in the power of detection of all methods.
The local trends in the last three rows have a large amplitude and
small frequency.  Without these local trends, the change-points 
would be easy to detect,
and the up-down pairs are ideal for CBS. The local power approximation of
Section 3.2 indicates that in the absence of the local trends local power to detect each of the four
change-points averages about 0.95.  Indeed, each change-point is detected,
but there is a striking increase in false positives for CBS (and Multi). 

\begin{table}[h] 
\caption{\label{localtrends} 
Examples With Sinusoidal Local Trends: $m = 200, \; 
b_{\rm LLR} = 4.54, \; 
b_{\rm CBS} = 4.13.$
Format as in Table \ref{tableexamples}, but to simulate 
local trends $0.2 \sin(0.1k)$
is added to the $k$th mean value in the first three rows. For the second three rows
the $k$th mean value is $0.4 \sin (0.05 k + U)$, where $U$ is a uniformly distributed random phase.
For the third three rows, the local trend is $0.7\sin(0.03k + U)$.}
\begin{center}
\begin{tabular}{|c|c|c|c|c|}
\hline
Procedure/Parameters & 60, 3.0  & 63, -0.2 & 83, 1.1  & 153, 2.0  \\ \hline
LLR & 59   & 63 & 83  & 152  \\
CBS & 60 & 63  & 83  & 152 \\ \hline
LLR & 60  & 63  & 78  &155  \\
CBS  &60  &63  &83, 141*  &155  \\ \hline
LLR & 60 & 63 & 99 & 0 \\
CBS  & 60 & 63 & 99 & 0 \\ \hline
Procedure/Parameters & 50, 1.5 & 65, -0.1 & 125, 1.2 & 145, 2.6\\ \hline
LLR  & 42 & 66 & 127 & 144 \\
CBS & 42 & 65,73* & 126 & 144 \\ \hline
LLR & 43 & 0 & 130 & 0 \\
CBS & 25*,43 & 0  & 0 & 143 \\ \hline
LLR & 50 & 65 & 122 & 145 \\ 
CBS & 50 & 65 & 0 & 144 \\ \hline
Procedure/Parameters & 50, 2.0 & 60, 0.0 & 135, 3.0 & 140,0.0 \\ \hline
LLR  & 50 & 60 & 135 & 140 \\
CBS & 50 & 60 & 121*, 135 & 140, 181* \\ \hline
LLR  & 50 & 60 & 135 & 140 \\
CBS & 47 & 60, 82* & 135 & 140, 184* \\ \hline
LLR  & 50 & 60 & 135 & 140 \\
CBS & 50 & 60 & 135 & 140 \\ \hline
\end{tabular}
\end{center}
\end{table}

Table \ref{tableMC} provides the outcomes of 1000 simulations 
for detecting $M$ change-points
randomly located from 0 to 500. The sizes of the changes are
normally distributed with mean value $2.5 \xi$, where the values 
$\xi$ are independently  $\pm 1$ with probability 1/2 and 
variance $0.5.$  The first  
method uses the LLR statistic with segmentation based on the 
smallest value of $k-i$ for which the statistic exceeds the
0.05 level threshold $b_{\rm LLR} = 4.83$;  the second is a faster
LLR procedure  introduced in the following paragraph; the third is 
the sequential version described above, with the threshold 4.33;
the fourth is the Wild Binary Segmentation (WBS) procedure of \cite{Fr14} with
5,000 random segments and
threshold $b_{\rm WBS}$ = 4.565. 
(This threshold is close to, but slightly different
from the value $1.3 [2\log(m)]^{1/2} = 4.58$ recommended by
Fryzlewicz, which was presumably determined by numerical
experimentation.  Our threshold would be the 0.15 significance threshold 
according
to the approximation \eq{2.1}.) 
The fifth is SaRa with the (simulated) threshold 4.42.
The last two procedures are CBS and Multi as discussed above.

Since LLR requires order $m^3$ computations, it can 
be slow for large values of $m$.
A possible speed-up is based on the observation 
that for the large backgrounds required to detect 
relatively small changes, determining the
exact background does not seem to be important.  Suppose that in
considering a fixed value of $j$,
to determine an appropriate $k$, we choose $k = j+1$,
then choose a new value of $k$ recursively as the old value plus
$\max(1, [(k-j)/10])$, where $[x]$ denotes the largest integer less than or equal to $x$.
Thus, for $k-j < 20$, we choose every integer, then every second integer for $k-j < 30$,
etc.  The computational complexity of this procedure is 
of order $m (\log(m))^2 \ell^2$.  In
Table \ref{tableMC} this procedure with $\ell = 10$ is denoted LLR-F.
Other speed-ups of a similar nature are possible.


\begin{table}[h] 
\caption{\label{tableMC} Random change-points, $m = 500$, $b_{\rm LLR} = 4.83$, $b_{\rm LLR-F}=4.83$, $b_{\rm SLLR} = 4.33$,
$b_{\rm WBS} = 4.565$, $b_{\rm SaRa}=4.42$, $b_{\rm CBS} = 4.36,$ $b_{\rm Multi} = 1.57.$  
The three rows in each entry are the number of times that the number of change-points is correctly  detected, under detected and over detected, respectively, in 1,000 repetitions. 
The accompanying  numbers in parentheses are
the number of change-points missed (false negative errors) and the number of
over detections (false positive errors). 
E(asy) denotes the number of
repetitions where all methods detected the correct number of change-points;
I(mpossible) gives the number of repetitions where no method detected 
the correct number of
change-points.}

\begin{center}
\begin{tabular}{|c|c|c|c|c|c|c|c|c|}
\hline
$M$ & LLR & LLR-F & SLLR & WBS & SaRa & CBS & Multi &E/I \\ \hline
0 &954 & 959 & 957 & 933 & 960 & 952 & 960 & 881/5 \\
& 0(0) & 0(0) & 0(0) & 0(0) & 0(0) & 0(0) & 0(0) & \\
& 46(53) & 41(47) & 43(77) & 67(86) & 40(41) & 48(99) & 40(73) & \\ \hline
3 & 843 & 842 & 846 & 819 & 806 & 807 & 826 & 662/72  \\ 
& 121(126)  & 123(129) & 83(87) & 112(116) & 152(161) & 93(98) & 79(83) &  \\ 
& 36(37) & 35(36) & 71(95) & 69(80) & 42(45) & 100(157) & 95(127) &  \\  \hline
5 & 683 & 674 & 714 & 691 & 598 & 665 & 660 & 440/180 \\
 & 299(355)  & 309(366) & 227(270) & 253(302) & 383(470) & 217(260) & 215(264) & \\
  & 18(20) & 17(19) & 59(76) & 56(60) & 19(19) & 118(155) & 125(157) & \\ \hline
8 & 362 & 355 & 487 & 392 & 274 & 425 & 427 & 170/359 \\
 & 626(935) & 635(949)  & 471(678) & 565(849) & 719(1171) & 422(619) & 443(655) & \\
  &12(14) & 10(12) & 42(48) & 43(49) & 7(7) & 153(205) & 130(164) & \\ \hline
\end{tabular}
\end{center}
\end{table}

The procedures LLR and SaRa are the only ones that in theory have 
control over the 
false positive rate; and the simulations  show that  others  
can make twice as many false positives errors.
As expected SaRa
has problems with detection of near-by change-points.  
Both these procedures pay for their strict control over the false positive rate 
with less power than the others, 
especially when there is a relatively large number of
change-points.  
CBS and Multi do not lose as much power as the number of change-points increases, 
but they suffer an increasing number of false positive errors.
SLLR and WBS do not have the false positive control of LLR and SaRa, but they 
also do not suffer as great a loss of power with an increasing
number of change-points.  Between SLLR and WBS, SLLR seems to perform
slightly better. 

Although the simple counts in Table \ref{tableMC}  without an indication of
accuracy of the detections are not definitive, as we see in Tables
\ref{tableexamples} and \ref{localtrends}, in most cases
accuracy of estimation of the change-points is less an issue than the errors of over or under detection.

\subsection{Array CGH data}

In this section we present examples involving changes in
copy number from array CGH data.  

We first consider the test cases GBM29 and GBM31 used by 
\cite{La05}, to compare different methods of segmentation.

For GBM29, the total length of the sequence is 193.
The estimated standard deviation is 0.76.
The theoretical 0.05 thresholds for 
LLR and SLLR are 4.53 and 4.07, 
while that for CBS is 4.12 and for Multi is 1.45.
Change-points are detected
at 
\be{
81, 85, 89, 96, 123, 133
}
by all  methods.

For GBM31, the length of the sequence is 797. 
The estimated standard 
deviation is 0.38.  All methods, except 
the multiscale statistic detect the same set of change-points, at
\be{
317, 318, 538, 727, 728.
} 

The third change-point is a relatively 
small change apparently indicating a long region of
loss of copy number; the 
first two and
last two change-points are large spikes.   
Only one of the two is
detected by the multiscale statistic,  which is designed to
favor detection of longer intervals.  



We have also tested our methods on the BT474 cell line data from
\cite{Snijders03}.  See \cite{Pollack99, Pollack02} for a
different experimental technique involving BT474 and a discussion
of the implications for breast cancer.  
This cell line has also been used by,
e.g., \cite{Zhao04}, who based their experimental
technique on SNPs rather than
array CGH.

For a scan of the entire genome, which involves slightly
more than 2000 observations,
we detect 63 change-points with LLR at a 0.05 
genome wide significance 
threshold
of $b = 5.2$; and we detect 67 using the pseudo-sequential procedure
with a threshold of 4.7.  
However, the data are organized by chromosomal location, and it turns out
that the estimated standard deviation varies considerably from
chromosome to chromosome.  Although the cited literature typically 
involves scans of the entire genome, we find a scan of each chromosome
using the estimated standard deviation of that chromosome more reasonable.

We continue to use genome wide thresholds, which are 
4.68 for CBS and 1.67 for Multi; but we now use standard
deviations specific to each chromosome.
Particularly interesting are chromosome 17, where an increase
in copy number 
appears to have implications for the severity of breast cancer, and chromosome
20, which appears to contain a second 
increase in copy number embedded in a modest
increase in copy number. 
For 
chromosome 17, there are $m = 87$ observations, with an estimated standard deviation 
of 0.51.  According to
LLR, SLLR, CBS and Multi, there is
an increase in copy number at the 35th observation (17q11.2-12), 
with a change back to baseline just two observations later. There is
a second increase at the 50th observation (17q21.3) and a 
return to the baseline at the 66th (17q23). Chromosome 20 contains
$m = 85$ observations, and the standard deviation is 0.59.  
LLR, SLLR, CBS, and Multi  again agree and detect a 
{\sl decrease}
in copy number from the 38th (20q11.2) to the 52nd observation,
followed by
an increase from the 53rd (20q13) to the 68th (20q13.1).  From
the 69th observation there
is an even larger increase until the 82nd (20q13.3), 
then a return to roughly the baseline value for the last three observations.

Also interesting are chromosomes 4, 5 and
11, all of which have several changes, and some of the changes are followed by
a second change after only a few observations.  On
chromosome 4 there are 162 observations and an
estimated standard deviation of 0.19. At the 0.05 
global significance level LLR and SLLR detected changes 
at 7, 8, 59, 61, 141, 143, and 155.  CBS and Multi detected the
same changes with the exception of 143, which both missed.
On chromosome 5 there were 99 observations and an estimated
standard deviation of 0.16. Changes were detected 
by all four methods at 
25, 45, 51,  and 65.  CBS and Multi also detected paired changes
at 87 and 91. The first of these was missed by LLR, and both 
were missed by SLLR.
On chromosome 11 there are 181 observations and an estimated 
standard deviation of 0.34.  Changes were detected
by all four methods
at 91, 124, 139, 144, 162, 165.  In this case SLLR also detected 
changes at 6 and 163.  Looking at a plot of the cumulative sum of the data
and the proximity of the statistic to the detection
threshold suggests that the change at
163 is a false positive.  The putative change at 6 is 
also borderline, but looks real in the cumulative sum plot.

To illustrate our confidence region calculations, we consider Chromosome
3, where there are 85 observations and change-points are detected
at 19, 39, and 44.  The estimated size of the change at 
44 is $\hat{\delta} = 2.25$,
while the changes at 19 and at 39 are estimated to be  substantially larger.
For simplicity we (conservatively)
use the single estimated difference, $\hat{\delta} = 2.25$, 
so from the theory developed above, the critical constant for a 
95\% joint conditional confidence region for the three change-points
is 4.63.  Using this threshold, a joint confidence region consists of
the exact point estimates 19 and 39, and the union of 43, 44, and 45.
For Chromosome 15 change-points are detected at observations 43 and 57,
where the smaller change is estimated to be about 2.3 and the other 
only slightly larger.  For the approximate threshold of $ b = 3.6$,
we found a 95\% joint confidence region to consist of the four pairs
42 or 43 and 56 or 57. 
For Chromosome 20, where we detected change-points at 38, 52, 68, and 82,
the smallest value of  $\hat{\delta}$ is 2.1 at 68.  Using this 
single estimator, we find that the critical 
constant for a 95 \% joint confidence region for four change-points is $b = 5.9.$
The union of the values that 
in various 4-tuples form the joint confidence region
are 38, 39, 51, 52, 66, 67, 68, and 82.  

\smallskip\noindent{\bf Remark.}  In studying copy number variation it is 
customary to plot the locus by locus measurements, which should be about 
equal to zero when
the copy number is two, with positive values indicative of amplifications
and negative values indicative of deletions.  There may be advantages 
to plotting the consecutive partial sums also and looking for
a change in slope to indicate an increase or decrease in copy number.
This plot is substantially smoother, and changes in slope that are 
candidates for change-points in copy number are often easier to see
than in a plot of the raw data.  The
disadvantage is that it is sometimes difficult to infer the
regions of normal copy number, which are regions where the 
slope is should be zero although it seems that it is always
different from zero.

\subsection{Simulations for Confidence Intervals}

In order to illustrate the size of the joint confidence regions introduced in
Section 3, we consider
in Table \ref{table10} some parameter settings related to Table 3.
The upper part of the table, like Table \ref{table9},  gives the estimated
coverage probability based on 10000 simulations for examples where the
threshold $b$ has been
selected so our theoretical approximation gives the
probability 0.05.  The lower part of the table gives the probability
from 1000 simulations that the indicated values of $t_1, \; t_2$  are
{\sl not} contained in the confidence region.  We have chosen values
of $t_i$ for which this probability is about 0.5, so one can regard the
difference between $t_i$ and $\tau_i$  as a rough measure of the size of
the confidence region
when all other parameters are set to their correct values.  Recall that
$\delta_i = \mu_i - \mu_{i-1}$ denotes the size of the change at $\tau_i$.

The rows beginning with 0.65 are particularly interesting, since they
show that the relatively small change at $\tau_1 = 138$
compared with very large change at
$\tau_2 = 225$
leads to substantially more uncertainty in the value of
$\tau_1$ compared to the value of $\tau_2.$
\begin{table}[h]
\caption{Likelihood ratio based joint confidence
intervals.  $\hat{p}$ is the simulated probability that the parameters $t_1$ and
$t_2$ are rejected when the true parameter values are $\tau_1$ and $\tau_2$.
Nominal confidence level is 0.05. Simulations are based on 10000 (1000)
repetitions in the first four (last 12) rows. }
\label{table10}
\begin{center}
\begin{tabular}{c|c|c|c|c|c}
\hline
$\delta_1$ & $\delta_2$ &  $a$ & $\tau_1,\; \tau_2$ & $t_1,\;t_2$&$\hat{p}$ (Monte Carlo) \\
\hline
2.13& 1.33 & 6.4 &9, 33 & 9, 33& 0.049 \\
2.5 & 4.0 & 5.35 & 87, 104 &87, 104& 0.051\\
0.65 & 2.5 & 6.65 & 138, 225 & 138, 225 & 0.047 \\
1.73 & 2.13 & 6.23 & 57, 66 & 57, 66 & 0.049\\ \hline
2.13 & 1.33 & 6.4 & 9, 33& 7, 33& 0.59 \\
2.13 & 1.33 & 6.4 & 9, 33 & 11, 33 & 0.58\\
2.13 & 1.33 & 6.4 & 9, 33 & 9, 29 & 0.47 \\
2.13 & 1.33 & 6.4 & 9, 33 & 9, 37 & 0.44 \\
0.65 & 2.5 & 6.65 & 138, 225 & 138, 227 & 0.75\\
0.65 & 2.5 & 6.65 & 138, 225 & 138, 223 & 0.73 \\
0.65 & 2.5 & 6.65 & 138, 225 & 120, 225 & 0.49 \\
0.65 & 2.5 & 6.65 & 138, 225 & 156, 225 & 0.46 \\
2.5 & 4.0 & 5.35 & 87, 104 & 87, 102& 0.43 \\
2.5 & 4.0 & 5.35 & 87, 104 & 87, 106 & 0.44 \\
2.5 & 4.0 & 5.35 & 87, 104 & 86, 104 & 0.89 \\
2.5 & 4.0 & 5.35 & 87, 104 & 88, 104 & 0.89 \\ \hline
\end{tabular}
\end{center}
\end{table}

\subsection{Comparison with other confidence intervals}

\cite{Mu14} suggested a different  method to construct a
confidence region jointly for the change-points and the mean values
of the observations in the segments connecting those change-points.
For each candidate set of change-points $\tau$ and mean values $\mu$,
they suggest an application of their multiscale statistic
\ben{\label{13}
\max \big( \frac{|S_j-S_i-(j-i)\mu|}{(j-i)^{1/2}}-[2 \log(3m/(j-i))]^{1/2}  \big)
}
where the maximum is taken over all $i<j$ within one of the segments
of $(0,\tau_1], \dots, (\tau_M, m]$, and $\mu$ is the hypothesized
mean value in the segment.
This is in effect a test of the hypothesis that there
are no change-points in the hypothesized
segments $(0,\tau_1], \dots, (\tau_M, m]$ and the mean values are
as hypothesized.
\cite{Wo86} discusses a similar idea under the assumption that there
is a single change-point, and one is interested only in a confidence region
for the change-point, not a joint confidence region for change-points and
means.  (Note that our approximation \eq{201} allows us to condition on the
sum of the observations in the interval under investigation and hence
use these ideas to obtain joint confidence regions for the change-points alone.)

It is difficult to make a comparison of the two methods.
In Table \ref{table13} we compare our confidence region defined by \eq{2} with that using \eq{13}
in a small number of examples.
We set $m = 200,$  $\tau_1 = 50, \; \tau_2 = 100$ and consider
values of the $\delta_i$ that
are large enough that most of the time we will detect  two
change-points.  The problem
becomes one of locating them and estimating the mean values.
For our confidence regions, we choose the thresholds
$b_1=7.2$ so
that the probability in \eq{6} equals 0.05.  This threshold was
confirmed by simulation. Moreover, for the statistic \eq{13}, we chose
the threshold $b_2=1.44$ for which a
20000 repetition simulation experiment gave the
probability  0.05.  This threshold is slightly larger than the
theoretical approximation 1.41.


Since a direct comparison of these regions in terms of size is conceptually
complicated and
technically demanding, we use the relation of confidence regions
to  hypothesis testing to compare them in terms of power.  Under specific
hypothetical, but incorrect, values of the change-points and mean values
the power of the test of the true values represents the probability that
the hypothetical values do not lie in the confidence region.  Hence the
procedure with larger power is preferred.
From Table \ref{table13}, it seems clear that for the
parameter settings analysed, the
likelihood ratio procedure is preferable.


\begin{table}[h]
\caption{Power to Detect Departure from True Parameter Values:
$\tau = (50,100)$ and $\mu$ as given; $t$ and $\xi$ are hypothesized
values of $\tau$ and $\mu$. The subscript 1 indicates the likelihood ratio
procedure, while 2 indicates the procedure based on \eq{13}.
Simulations are based on 10000 repetitions.}
\label{table13}
\begin{center}
\begin{tabular}{c|c|c|c|c}
\hline
$\mu$ & $\xi$ & $t$ &  $\widehat{\rm Power}_1$ (Monte Carlo) & $\widehat{\rm Power}_2$ (Monte Carlo)\\ \hline
0.0,1.0,0.0 & 0.0,1.0,0.0 & 55,95 & 0.64 & 0.08 \\
0.0,1.0,0.0 & 0.1,0.9,-0.2 & 55,95,& 0.87 & 0.40 \\
0.0,1.0,0.0 & 0.1,0.9,-0.2 & 40,100 & 0.75 & 0.32 \\
0.0,1.2,2.0 & 0.0,1.2,2.0 & 47,105 & 0.47 & 0.044 \\
0.0,1.2,2.0 & 0.0,1.5,1.9 & 47,105 & 0.75 & 0.32 \\
0.0,1.5,0.75& 0.1,1.4,0.9 & 40,97 & 0.96 & 0.69 \\
0.0,1.5,0.75 & 0.0,1.5,0.75 & 44,98 & 0.81 & 0.32 \\
0.0,1.2,-0.1 & 0.1,1.1,0.1 & 48,103 & 0.68 & 0.22 \\
0.0,1.1,0.1 & -0.2,1.0,0.0 & 52,115 & 0.91 & 0.44 \\
0.0,1.0,2.0 & -0.1,1.1,2.1 & 45,110& 0.87 & 0.24 \\ \hline
\end{tabular}
\end{center}
\end{table}

\section{Exponential Families}
A natural generalization of the methods of this paper involve data from 
exponential families, where there usually is the option to pursue analogous 
methods or to use a normal approximation.  We first develop the analogous theory
and discuss the second possibility below.  

Assume $X_1,\dots, X_m$ are independent and from a one-parameter exponential family of distributions $\{F_\theta: \theta\in \Theta\}$ where
\be{
\frac{d F_{\theta}}{d u} (x)=\exp(\theta x -\psi(\theta)), \quad x\in \mathbb{R},\  \theta\in \Theta,
}
$u$ is a $\sigma$-finite measure on the real line and $\Theta$ is an open interval.
For $0\leq i<j<k\leq m$, the likelihood ratio statistic to test whether $j$ is a change-point in the local background $(i,k)$ is
\bes{
\ell_{i,j,k}=&(j-i)\sup_{\theta_1\in \Theta}(\theta_1 \frac{S_j-S_i}{j-i}-\psi(\theta_1))
+(k-j)\sup_{\theta_2\in \Theta}(\theta_2 \frac{S_k-S_j}{k-j}-\psi(\theta_2))\\
&-(k-i)\sup_{\theta\in \Theta}(\theta \frac{S_k-S_i}{k-i}-\psi(\theta)).
}
In the following, we use $\p_\theta$ ($\E_\theta$ resp.) to denote the probability (expectation resp.) calculated when $X_i \sim F_\theta,\ \forall \ i$.
Following the proof of \eq{2.1}, we suggest the following approximation to the $p$-value of $\max_{i,j,k}\ell_{i,j,k}$:
\besn{\label{14}
&\p_\theta(\max_{i<j<k \atop m_0\leq j-i, k-j\leq m_1}\ell_{i,j,k} \geq \frac{b^2}{2})\\
\sim & \varphi(b)
\sum_{m_0\leq n_1, n_2\leq m_1:\atop n_1+n_2\leq m}
(m-n_1-n_2)\sum_{\theta_1,\theta_2}
\frac{a(\theta_1,\theta)a(\theta_1,\theta_2)a(\theta, \theta_2)}{[n_1(\theta_1-\theta)^2\psi''(\theta_1)+n_2(\theta_2-\theta)^2\psi''(\theta_2)]^{1/2}}
}
where the second summation is over two pairs of $\theta_1<\theta_2$, which are assumed to exist (see the remark below), solving
\ben{\label{15}
\begin{cases}
\psi'(\theta_1)n_1+\psi'(\theta_2)n_2=\psi'(\theta)(n_1+n_2),\\
n_1[\theta_1 \psi'(\theta_1)-\psi(\theta_1)]+n_2[\theta_2 \psi'(\theta_2)-\psi(\theta_2)]-(n_1+n_2)[\theta \psi'(\theta)-\psi(\theta)]=b^2/2,
\end{cases}
}
and for $\theta_1<\theta_2$,
\be{
a(\theta_1,\theta_2)=\exp(-\sum_{1}^{\infty} n^{-1} \E_{\theta_2} e^{-[(\theta_2-\theta_1)S_n-n(\psi(\theta_2)-\psi(\theta_1))]^+}).
}
We use Theorem 8.51 of Siegmund (1985) and Theorem A of \cite{TuSi99} to compute $a(\theta_1,\theta_2)$ numerically for nonarithmetic and arithmetic random variables respectively.

\noindent{\bf Remark.} For those $n_1$ and $n_2$ such that the solutions to \eq{15} do not 
exist, we first find the smallest $\theta'>\theta$ such that the solutions to \eq{15} with 
$\theta$ replaced by $\theta'$ exist. We denote the solutions by $\theta_1'$ and $\theta_2'$. 
Then the proposed approximation is the RHS\eq{14} with $\theta, \theta_1, \theta_2$ replaced 
by $\theta', \theta_1', \theta_2'$ respectively, and multiplied by $\p_\theta(S_{n_1+n_2}/(n_1+n_2)\geq \psi'(\theta'))$.

\subsection{Simulations}

We first consider the exponential distribution with rate $\lambda$. Observing that in \eq{14}, both the probability and its approximation do not depend on $\lambda$, we choose $\lambda=1$ without loss of generality. We fix $m_0=1$. In Table \ref{table21}, with different values of $m, m_1$ and $b$,  $p$ denotes the RHS\eq{14}
and $\hat{p}$ denotes the simulated $p$-value with 2000 repetitions.
We see from Table \ref{table21} that our approximation to the $p$-values 
are reasonably accurate, especially when $m$ and $m_1$ are large.
A normal approximation would also be quite reasonable,
especially for larger $m_1$ and $m$.  For example, for the last line of Table \ref{table21}
our normal approximation gives the probability 0.053.

\begin{table}[h]
\caption{Exponential distribution with rate $\lambda$.}
\label{table21}
\begin{center}
\begin{tabular}{c|c|c|c|c|c}
\hline
$\lambda$ & $m$ & $m_1$ & $b$ & $p_{\rm Approx}$ &   $\hat{p}$ (Monte Carlo) \\ 
\hline
1 & 500 & 50 & 4.72 & 0.049 & 0.061\\
1 & 500 & 100 & 4.78 & 0.048 & 0.053  \\
1 & 1000 & 100 & 4.95 & 0.047 & 0.048\\ \hline
\end{tabular}
\end{center}
\end{table}

Next, we consider the inverse Gaussian distribution with fixed shape parameter $\lambda=10$. We fix $m_0=1$. With different values of the mean $\mu$, $m, m_1$ and $b$, $p$ denotes the RHS\eq{14} and $\hat{p}$ denotes the simulated $p$-value with 2000 repetitions. We can see from Table \ref{table12} that both the theoretical and simulated $p$-values are reasonably robust against the mean $\mu$.

\begin{table}[h]
\caption{Inverse Gaussian distribution with shape parameter $\lambda=10$.}
\label{table12}
\begin{center}
\begin{tabular}{c|c|c|c|c|c}
\hline
$\mu$ & $m$ & $m_1$ & $b$ & $p_{\rm Approx}$ &   $\hat{p}$ (Monte Carlo) \\ 
\hline
1 & 300 & 30 & 4.5 & 0.059 & 0.053\\
5 & 300 & 30 &4.5 & 0.041 & 0.050\\ 
1 & 500 & 100 & 4.78 & 0.049 & 0.037\\
5 & 500 & 100 & 4.78 & 0.035 & 0.031\\
1 & 1000 & 100 & 4.95 & 0.049 & 0.050\\
5 & 1000 & 100 & 4.95 & 0.036 & 0.034\\
\hline
\end{tabular}
\end{center}
\end{table}

Since the computation of appropriate thresholds for non-normal exponential families
is somewhat complicated, one may also consider the use of normal approximations, which 
in these cases would work quite well.   Following are two examples where a Gaussian approximation
to the signed square root of the likelihood ratio statistic seems to perform admirably.

For the detection 
of CG rich regions in  genomic studies, as mentioned in the introduction, 
the sequences are very long and 
the exact boundary between regions has little biological significance. Hence 
one often forms groups of consecutive Bernoulli variables. Following 
\cite{EGJ10}, we have used groups of 33 consecutive 
Bernoulli variables. Since the values of the Bernoulli parameters $p$ are 
usually neither extremely small nor extremely large, possibilities that  might indicate 
a Poisson approximation, we have tentatively assumed that we can use 
the theory developed above for the normal distribution. Since the Bernoulli 
variances must be estimated locally in each homogeneous region, it turns out 
that  the skewness of the binomial distribution when $p$ is not in the 
immediate neighborhood of 1/2 can make an approximation of the 
distribution of the scaled value of $[S_j - S_i - (j-i)(S_k - S_i)/(k-i)]$
by a normal distribution unsatisfactory, unless the size of the groups is relatively large. 
Consequently we have used the signed square roots of the log likelihood 
ratio statistics, which behave very much like a Gaussian process. 
It turns out that 
simulations of this process indicate that the
approximation is quite  satisfactory and offer no new insights, so
we omit the details. 

The copy number data discussed in this paper was all obtained by comparative genomic
hybridization.  To achieve greater resolution, many present day studies use sequence
data (e.g., Zhang {\sl et al.} 2016), which often utilize models built from Poisson processes.
The simplest of these is concerned with detection of a change from a background rate
for a Poisson process.  Since the background rate varies with genomic position due
to variation in sequencing depth,
local detection
procedures along the lines of LLR may be useful.  
Like the binomial distribution, to
detect changes in the rate of a Poisson process, simulations
support an approximation based on a normal approximation to the signed square root of
the (generalized) log likelihood ratio statistic.  For 500 observations, $b = 4.83$,  
and the mean of the
Poisson distribution equal to 10, 400 simulations gave the significance value 0.0475,
when our normal approximation gives the value 0.05.   Calculation of Kullback-Leibler
information suggests that for detecting 
changes from 10 to 20 and back to 10 in well separated intervals, interval lengths of 6 and 7 
are borderline detectable.  Several simulations of this case involving two pairs
of change-points lead to successful
detections of all four change-points,
while the differences between the 
estimates of the change-points and the true values totaled 1-3 observations.

\subsection{Changes in a Normal Mean and Variance}
An interesting, but considerably more complex example, is to allow for simultaneous changes
to both the mean and variance (or mean vector and
covariance matrix) of a sequence of independent, normally distributed observations.
Although the formulation we have adopted, which assumes a constant value of the variance is much more common, 
and the copy number data considered above shows little evidence of heteroscedasticity within chromosomes,
the recent paper (\cite{DKK16}), where the possibility of simultaneous changes in the mean and
variance is considered, motivates the following brief discussion. 

For $0 \leq i < j \leq m$ let
$\ell_{i,j} = -.5(j-i)\log(\sigma^2) - .5 \sum_{i+1}^j (X_k - \mu)^2/\sigma^2$
denote the log likelihood of $X_{i+1}, \cdots, X_j$, and let 
$\hat{\ell}_{i,j} = -.5(j-i)\log(\hat{\sigma}^2_{i,j}) - .5 
\sum_{i+1}^j (X_k - \bar{X}_{i,j})^2/\hat{\sigma}_{i,j}^2$ denote the
log likelihood with parameters replaced by estimators.  When the 
estimators are the maximum likelihood estimators, the
generalized likelihood ratio statistic (which reduces to 
one-half the square of (\ref{LR}) 
in the case of known $\sigma^2 = 1$) is
$\hat{\ell}_{i,j} + \hat{\ell}_{j,k} - \hat{\ell}_{i,k}$, maximized 
over $i < j < k$.  Necessarily we must take the minimum values of
$j-i$ and $k-j$ at least equal to $m_0 = 2.$ If one is interested in detecting changes 
occurring as close together as those studied above, 
this maximum likelihood ratio statistic is very unstable when
there are no changes and
$j-i$ or $k-j$ is small, since the maximum likelihood estimator of 
$\sigma^2$ can with substantial probability assume very small 
values.   The consequence is that
a suitable threshold to control the rate of false positives must be so
large that the statistic has very poor power to detect changes, and this
problem persists even when $m_0$ is substantially larger than 2.    

A device to ameliorate this problem that maintains the invariance of
the likelihood ratio  statistic under scale and
location changes is to subtract a small constant $c/2$ from the sample 
size in the denominators of the estimators $\hat{\sigma}_{i,j}^2$ 
and $\hat{\sigma}_{j,k}^2$, and subtract $c$ from the denominator of
$\hat{\sigma}_{i,k}^2$.  Then with these
new estimators (denoted by a tilde) use the statistic
$ -(j-i-c/2) \log(\tilde{\sigma}^2_{i,j}) - (k-j -c/2) 
\log(\tilde{\sigma}^2_{j,k})
+ (k-i-c) \log(\tilde{\sigma}^2_{i,k}).$  
In simulations we have found that with $m_0 = 2$ and $c \approx 2.7$, this statistic
has a false positive rate approximately the same as a two dimensional
version of (\ref{LR}), for which the significance level and
power approximations of this paper are easily adapted. 
A similar result holds for the corresponding CBS statistic.
If the variance changes by
a factor of $1+\Delta$, the difference in mean values, scaled to
unit standard deviation, is $\delta$, and $\pi$ denotes the fraction of
observations at unit variance before a change-point,
rough law of large numbers arguments indicate a noncentrality parameter
in large samples proportional to
\[\pi(1-\pi)\log(1 + \pi(1-\pi)\delta^2+(1-\pi)\Delta) - (1-\pi)\log(1+\Delta)
\] 
for the two dimensional statistic. 

If in fact there is no change in the variance the marginal power of 
the two dimensional statistic to detect a change-point
is approximately 0.2 - 0.3 less than the power of (\ref{LR}).  When the variance does change, 
theoretical calculations and  simulations suggest that there is a complex 
tradeoff that depends on the size of the
changes in variance and the relative locations of the various change-points.  Finally, there is 
also the issue that the likelihood ratio statistic that tests for a change in both mean and variance
will not be as robust against excess kurtosis as a statistic that tests only for a change in mean value.

Following are the results of a few simulations that indicate the 
complexity of the problem.  
The statistics considered are the 
two-dimensional statistic suggested in this section, the statistic (\ref{LR}), and a modified
version of (\ref{LR}), designed to compensate for the possibility that (\ref{LR}) has an 
excess of false positives.  Since (\ref{LR}) estimates an average variance, if there is a 
sub-interval where the variance is much larger than that average variance,  the statistic 
(\ref{LR}) will use an inappropriately small variance estimate, which may lead to false 
positives.  The modification of (\ref{LR})  is as follows:  for any $i < k$, when searching for a putative 
change-point in $[i,k]$, standardize the process by the estimated (maximum likelihood)  variance
of the observations $X_i, \ldots, X_k$.  If there is a change-point in the interval, the 
maximum likelihood estimate may be positively biased, but other possibilities appear to be 
too unstable when the interval is short.  Simulations indicate that the thresholds suggested
by Theorem 1 are conservative.  

\begin{table}[h]
\caption{Changes in Mean and Variance: $m = 200$, Threshold for (1.1) and for the
modification suggested above is $b_1 = 4.54$;
threshold for the two dimensional statistic is $b_2 = 4.97.$ Detected change-points are 
as noted for (\ref{LR}), for the modification indicated in the text (denoted by an 
asterisk), and for the two dimensional statistic suggested in this Section, respectively. False positive
errors are denoted by an asterisk.}
\label{table14}
\begin{center}
\begin{tabular}{c|c|c|c|c|c}
\hline
$\tau$ &$\mu$ & $\sigma$ &  (1.1) & $(1.1)^*$ & 2-D  \\
\hline
38,88,108,132 & 1.1,2.7,1.0,2.5 & 1.1,1.8,1.1,1.7 & 0, 88,104,132& 0,86, 0,132 & 0, 88,0,133  \\
38,88,108,132 & 1.1,2.7,1.0,2.5 & 1.1,1.8,1.1,1.7 & 39,86,106,135 & 39,86,106,0 & 39,88,107,0 \\
38,88,108,132 & 1.1,2.7,1.0,2.5 & 1.1,1.8,1.1,1.7 & 0,0,0,132 & 37,0,0,132& 69,82,0,0\\
38,88,108,132 & 1.1,2.7,1.0,2.5 & 1.1,1.8,1.1,1.7 & 0,90,108,134 & 36,90,0,134 & 36,84,0,134\\ \hline
30,80,110,135& 1.5,0.5,2.5,1.0& 1.5,1.0,2.0,1.2& 0,0,110,134 & 0,0,110,0 & 0,0,110,136 \\
30,80,110,135& 1.5,0.5,2.5,1.0& 1.5,1.0,2.0,1.2& 30,74,110,115*,132 & 30,74,110,132 & 30,77,110,133 \\ 
30,80,110,135& 1.5,0.5,2.5,1.0& 1.5,1.0,2.0,1.2& 30, 0,110,134 & 30, 0,110,134 &30, 0,110,134  \\ \hline
48,50,150,154& 4.0,0.0,4.0,0.0 & 2.0,1.0,2.0,1.0 & 48,50,150,154 & 0,0,150,153 & 0,0,150,155 \\
48,50,150,154& 4.0,0.0,4.0,0.0 & 2.0,1.0,2.0,1.0 & 48,50,150,154 & 48,50,150,154 & 48,50,150,155 \\ 
48,50,150,154& 4.0,0.0,4.0,0.0 & 2.0,1.0,2.0,1.0 & 0,0,150,155 & 0,0,150,155  & 0,0,150,154  \\ 
48,50,150,154& 4.0,0.0,4.0,0.0 & 2.0,1.0,2.0,1.0 &48,50,149,154 & 48,50,0,0 & 48,50,0,0 \\ 
48,50,150,154& 5.0,0.0,5.0,0.0 & 2.0,1.0,2.0,1.0 &48,50,150,154 & 48,50,150,154 & 0,0, 150,155 \\ \hline
\hline
\end{tabular}
\end{center}
\end{table}

In Table \ref{table14} the false positive in the sixth row is presumably a reflection
of the fact that in the interval between 110 and 135
the variance of the observations is substantially larger than the ``average variance'' used by 
(\ref{LR}).  Although we did not observe this in a number of other simulations, not reported here, 
this possibility of
an inflated false positive error rate appears to be one of the principal disadvantages of using the 
unmodified (\ref{LR}), which otherwise seems to performs very well.  The last five rows were based on the test 
case suggested by \cite{DKK16} following an earlier suggestion of \cite{La05}, 
but we have reduced the signal to noise ratio to make more difficult what otherwise
would be easy detections.
In those last five rows we see the effect on the two dimensional statistic of the
constant $c \approx 2.7$,
which was introduced to reduce false positive errors in short test intervals, but here 
has an adverse effect on 
the power.  For the 
modified version of (1.1), which often behaves quite similarly to the two dimensional statistic, the
loss of power is presumably due to estimating the variance locally, which leads to large positive biases
in (short) intervals containing change-points. 

Since multiscale methods are designed to favor detection of change-points in longer
over shorter intervals, it
is natural to ask if imposition of a multiscale penalty on the square root of the likelihood ratio 
statistic would work here.  Some numerical experimentation suggests that 
the penalty   
$[4 \log(3m/\min(j-i,k-j))]^{1/2}$ allows one to 
control the false positive rate, and the multiscale statistic performs about as well 
in these examples as the two dimensional statistic defined above. 
 

\section{Discussion}

We have studied local thresholding procedures for segmenting sequences
of independent random variables subject to change-points in the mean.
The local likelihood ratio statistic, LLR, considers for
local subsets of intervals $(i,k)$ the log likelihood ratio statistic
$Z_{i,j,k}$ for detecting 
a change-point at $j$, which is compared to a threshold
designed to control the probability of a false positive error.
The pseudo-sequential procedure SLLR leaves $i$ fixed
at 0 or at the most recently discovered candidate change-point,
then sequentially with respect to $k$ examines
$\max_{i < j < k} Z_{i,j,k}$ until it exceeds a suitable threshold.
The statistic LLR has better false positive control, 
although it requires a relatively large threshold, and hence loses some 
power compared to SLLR, especially when the number of
change-points is large.

Our suggested procedures are compared to 
several other threshold based procedures that attempt to control,
with varying degrees of success, the
false positive error rate:
(i) the Wild Binary Segmentation (WBS) procedure of 
\cite{Fr14}, (ii)
the SaRa procedure of \cite{NZ12}, 
(iii) the CBS procedure of \cite{OlVe04}, and (iv) Multi, a related
iterative threshold based
implementation of the statistic of \cite{Mu14}.
Each of these methods has strengths and weaknesses,  
some obvious, others not so obvious.  
The procedures WBS, SLLR, CBS, and Multi 
have the best power of detection.  CBS and Multi have less adequate
control over the false positive rate, especially when the number of
change-points is large.
The statistics LLR and SaRa
provide strict asymptotic control of
of the false positive error rate, but LLR has less power than the others,
SaRa suffers a severe loss of power when change-points are close together. 
CBS and Multi show expected power advantages/disadvantages, with
CBS performing better in detecting near-by change-points 
of large amplitude and Multi performing better
in detecting distant change-points of small amplitude.
If at both of relatively nearby change-points the mean moves in the same direction,
CBS and Multi often pick an intermediate value and fail to detect the second change-point.
Our thresholding implementation of Multi is based on 
the approximation (2.9) and 
omits the dynamic programming step 
from the algorithm suggested in 
\cite{Mu14}.  We (and others) found that algorithm to perform 
poorly when used with default parameters;
but our analysis shows that when it is calibrated to have a 
false positive rate comparable to the others, it performs competitively.

In view of the increase in false positives for CBS and Multi
and decrease for LLR when the number of change-points is large,
if one's goals are primarily exploratory,
a visual and/or
rough preliminary analysis that gives us some idea of the
number and configuration of change-points may be helpful in
choosing a detection threshold  that brings the false positive
and false negative rates into balance.


Inversion of the log likelihood ratio statistic is used
to obtain approximate confidence regions for the locations of the
change-points or
jointly for the mean values and the locations.  For the latter
case \cite{Mu14} suggested a quite different method,
which amounts to testing whether there is a change
in the hypothesized mean values between any two hypothesized
change-points.  Numerical examples
suggest that for change-points of large amplitude
our methods provide more accurate estimates,
although our asymptotic control of the confidence level deteriorates if the 
sizes of the changes or the distance between consecutive change-points are not 
sufficiently large.

We have studied briefly the problem noted in the literature 
on detection of copy number variations, 
that there are local drifts that can give the
appearance of change-points where there is none. For these problems
all methods appear to suffer some loss of power, but the method LLR,
which uses a local background seems less likely to experience an
increase in false positives than the top down methods CBS and Multi. 

We have provided a brief discussion of detection of simultaneous changes
in a normal mean and variance, but our analysis to date suggests that the
problem is quite complicated and requires additional study.

We have assumed the observations are independent, which appears 
to be the case for the 
problems motivating our study, both from the nature of the experiments 
and from the
data themselves.  This provides a considerable advantage in estimating the
variance, as discussed above.  For weakly dependent data there
are roughly two different alternatives, which we are now studying.
For short range dependence, if 
the distance between change-points is
proportional to the number of observations,
a number of authors (e.g., \cite{Lund16}) have observed that 
weak convergence arguments
can be used to obtain mathematical results similar 
to those given above, expressed in terms of Brownian motion.  Details
involve correcting the (estimate of the) variance for autocorrelation of the individual
observations. 
A second approach is to use a low order autoregressive model,
which allows one to pursue an asymptotic likelihood analysis, similar
to what we have followed in this paper.   
Our preliminary studies
suggest that both approaches are successful under certain conditions
in controlling the 
false positive rate, but lead to a substantial loss of power,
because of biased estimates of the variance and autocorrelation when
there are change-points.
We expect to provide a thorough discussion of these
problems in the future. 

Another  challenging problem is to detect and estimate local signals in 
spatial data (with or without a temporal variable).  For independent
observations on a rectangular grid, our methods  
generalize easily to signals having a rectangular shape with sides
parallel to the sides of the grid.  A natural question is the extent to
which this is adequate for detection of the signals
having the many irregular shapes possible in higher dimensions.  
Another approach would be to smooth the signals, which opens up the
possibility of dealing with a much larger set of shapes.

\section{Appendix}

\subsection{Appendix A}

In this appendix, we prove Theorem \ref{t2}. Theorem \ref{t3} follows from the same arguments and
therefore its proof is omitted.
The claims stated in the proof will be proved below.

\begin{proof}[Proof of Theorem \ref{t2}]
Recall the basic representations immediately below the statement of the theorem.

The constant $C$ will be chosen in Claim \ref{claim2}.
It is straightforward to verify that the remainder $R$ is of smaller order than $p$:
\bes{
|R|&\leq  \sum_{0\leq i<j<k\leq m \atop j-i, k-j\geq c_0 b^2} \P(Z_{i,j,k}\geq b+1)
+\sum_{0\leq i<j<k\leq m \atop i\leq C\log b \ \text{or}\ k\geq m-C\log b} \P (Z_{i,j,k}\geq b)\\
&\leq m^3 (1-\Phi(b+1))+2C(\log b) m^2 (1-\Phi(b))  \\
&\asymp b^5\varphi(b)e^{-b} +(\log b) b^3 \varphi(b) =o(p).\\
}

For $c_0 b^2\leq u,v\leq m$, define
\be{
d_1=d_1(u, v)=\frac{b}{2u\sqrt{\frac{1}{u}+\frac{1}{v}}},
}
\be{
d_2=d_2(u, v)=\frac{b}{2}\sqrt{\frac{1}{u}+\frac{1}{v}},
}
and
\be{
d_3=d_3(u, v)=\frac{b}{2v\sqrt{\frac{1}{u}+\frac{1}{v}}}.
}
Since $u, v\asymp b^2$, we have $d_i, \nu(2d_i)\asymp 1$ for $i=1,2,3$.

To prove \eq{2.1*},
we only need to show that for any $C \log b\leq i<j<k\leq m- C \log b$ such that $ j-i, k-j\geq c_0 b^2$, we have
\ben{\label{18}
{\p}(\max_{0\leq r<s<t \leq m \atop s-r, t-s \geq c_0 b^2} Z_{r,s,t}\leq b | Z_{i,j,k}=b)
\sim \prod_{i=1}^3 (2d_i^2) \nu(2d_i),
}
where $d_i$ is defined above with $u=j-i, v=k-j$.

In the following we fix any $i,j,k$ such that $C \log b\leq i<j<k\leq m - C \log b$ and $ j-i, k-j\geq c_0 b^2$ and prove \eq{18}. Let $u=j-i, v=k-j$, and let $d_1, d_2, d_3$ be as above.
We also assume the mean $\mu=0$ without loss of generality.

\begin{claim}\label{claim2}
There exists a large enough constant $C$ such that
\be{
{\p} (\max_{0\leq r<s<t \leq m \atop s-r, t-s \geq c_0 b^2, (|r-i| \vee |s-j| \vee |t-k|)\geq C \log b } Z_{r,s,t}> b | Z_{i,j,k}=b)=o(1).
}
\end{claim}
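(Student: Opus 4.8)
The plan is to treat Claim \ref{claim2} as a localization statement: once we condition on $Z_{i,j,k}=b$, any triple $(r,s,t)$ that differs from $(i,j,k)$ by at least $C\log b$ in some coordinate is too weakly correlated with $Z_{i,j,k}$ to reach the level $b$, except on an event of probability a large negative power of $b$. The basic tool is the exact Gaussian decomposition. Since the statistics in \eqref{LR} are contrasts, they are invariant under a common shift of the means, so under the null the field $\{Z_{r,s,t}\}$ is a centered, unit-variance Gaussian field regardless of $\mu$. Hence for each fixed $(r,s,t)$, writing $\rho=\rho(r,s,t)$ for the correlation between $Z_{r,s,t}$ and $Z_{i,j,k}$, we may write $Z_{r,s,t}=\rho Z_{i,j,k}+\sqrt{1-\rho^2}\,\xi$ with $\xi$ standard normal and independent of $Z_{i,j,k}$, so that
\be{
\p(Z_{r,s,t}>b\mid Z_{i,j,k}=b)=1-\Phi\Big(b\sqrt{\tfrac{1-\rho}{1+\rho}}\Big).
}
Everything then reduces to a uniform lower bound on $b^2(1-\rho)$ in terms of the coordinate displacement.

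First I would record the covariance structure of the field. Because $s-r,\,t-s\ge c_0b^2$ and $r,s,t$ stay within $[C\log b,\,m-C\log b]$, all denominators appearing in \eqref{LR} are of order $b$, so $\rho$ is a smooth function of the displacements $a=r-i,\ c=s-j,\ e=t-k$. A direct computation—treating $Z_{i,\cdot,k}$ as a normalized Brownian bridge in its middle argument, and similarly for the two endpoint arguments—shows that a displacement of size $\ell$ in any single coordinate already forces $1-\rho\gtrsim \ell\,(1/u+1/v)$ with $u=j-i,\ v=k-j$; since $u,v\asymp b^2$ and the quantities $d_i\asymp1$, this gives $b^2(1-\rho)\gtrsim \ell$. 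The content of the paper's remark that local perturbations lead to approximately independent random walks is precisely that the three displacement directions contribute additively to $1-\rho$ and cannot conspire to keep $\rho$ near $1$; hence on the far region, where $\max(|a|,|c|,|e|)\ge C\log b$, we obtain $b^2(1-\rho)\ge \kappa_0 C\log b$ for an absolute constant $\kappa_0>0$.

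Combining this with the Gaussian bound, for every far triple
\be{
\p(Z_{r,s,t}>b\mid Z_{i,j,k}=b)\le 1-\Phi\big(\sqrt{\tfrac{1}{2}\kappa_0C\log b}\big)\le b^{-\kappa C}
}
for some $\kappa>0$ (for very large displacements $\rho$ is bounded away from $1$, and the bound is then even super-polynomially small). The far region contains at most $m^3=O(b^6)$ triples, so a crude union bound yields
\be{
\p\Big(\max_{\cdots}Z_{r,s,t}>b\mid Z_{i,j,k}=b\Big)\le O(b^{6})\,b^{-\kappa C}=o(1),
}
provided $C$ is chosen with $\kappa C>6$. One can sharpen the count by observing that only $O(b^{4}\log b)$ triples attain the worst bound $b^{-\kappa C}$, while larger displacements give smaller per-triple bounds; either way the sum is $o(1)$. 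This fixes the constant $C$ and proves the claim.

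The step I expect to be the main obstacle is the uniform lower bound $b^2(1-\rho)\ge\kappa_0\max(|a|,|c|,|e|)$ of the second paragraph. Although each single-coordinate perturbation is the classical Brownian-bridge computation, here all three endpoints move simultaneously and by arbitrary amounts subject only to $s-r,t-s\ge c_0b^2$, so one must verify that the variation of the normalizing denominators contributes only lower-order corrections and that no cancellation among the three directions can keep $\rho$ close to $1$ when some displacement is large. Making this covariance estimate uniform over all admissible $(i,j,k)$ with $j-i,k-j\ge c_0b^2$ and over the range $u,v\asymp b^2$ is the technical heart of the argument; once it is in hand, the Gaussian decomposition and the union bound are routine.
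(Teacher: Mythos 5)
Your proposal is correct and takes essentially the same route as the paper: a union bound over the $O(b^6)$ admissible triples, a per-triple bound on $\p(Z_{r,s,t}>b \mid Z_{i,j,k}=b)$ of order $b^{-cC}$, and a choice of $C$ large enough to beat the entropy factor. Your Gaussian-regression packaging, $Z_{r,s,t}=\rho Z_{i,j,k}+\sqrt{1-\rho^2}\,\xi$ together with the covariance bound $b^{2}(1-\rho)\gtrsim |r-i|\vee|s-j|\vee|t-k|$, is an exact restatement of the paper's observation that $Z_{r,s,t}-Z_{i,j,k}$ behaves locally like Gaussian random walks with constant negative drift, so the two arguments coincide in substance.
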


From Claim \ref{claim2}, the maximum in \eq{18} can be restricted to those $r,s,t$ such that
$|r-i|, |s-j|, |t-k|\leq C\log b$.

Next, we note that given $Z_{i,j,k}=b$, except for a set of vanishingly small probability, 
\ben{\label{21}
\frac{S_{j}-S_i}{j-i}= 2d_1(1+o(1))  \ \text{and}\ \frac{S_k-S_j}{k-j}=- 2d_3(1+o(1)).
}
This, together with Theorem 1.6 of \cite{DiFr88} and the fact that $\log b\ll c_0 b^2$, implies that given $Z_{i,j,k}=b$,
\be{
X_{i+1},\dots X_{i+C\log b}, X_{j-C\log b+1}, \dots, X_j, X_{j+1},\dots X_{j+C\log b}, X_{k-C\log b+1}, \dots, X_k
}
are asymptotically mutually independent Gaussian variables with variance 1, the first half of the $X$'s have mean $2d_1$ and the second half of the $X$'s have mean $-2d_3$.
Let us first consider the case $r=i, s=j$ and $k<t\leq k+C \log b$ in \eq{18}.
Note that $Z_{i,j,k}=b$ and $Z_{i,j,t}\leq b$ are equivalent to
\ben{\label{19}
v(S_j-S_i)-u(S_k-S_j)=b u v \sqrt{\frac{1}{u}+\frac{1}{v}}
}
and
\ben{\label{20}
(v+t-k) (S_j-S_i)-u(S_t-S_k+S_k-S_j)\leq b u (v+t-k)\sqrt{\frac{1}{u}+\frac{1}{v+t-k}}.
}
Subtracting \eq{19} from \eq{20} and using Taylor's expansion, we have
\bes{
&(t-k) (S_j-S_i)-u(S_t-S_k)\\
\leq & bu(v+t-k)\sqrt{\frac{1}{u}+\frac{1}{u+t-k}}-buv\sqrt{\frac{1}{u}+\frac{1}{v}}\\
=& (t-k) \Big[ \frac{b}{\sqrt{\frac{1}{u}+\frac{1}{v}}} +\frac{bu}{2v\sqrt{\frac{1}{u}+\frac{1}{v}}}  \Big] (1+o(1));
}
hence, by \eq{21}, given $Z_{i,j,k}=b$, $Z_{i,j,t}\leq b$ is equivalent to
\be{
-(S_t-S_k)+2d_1 (t-k) (1+o(1))\leq [2d_1+d_3](t-k)(1+o(1)).
}
Therefore, with $l:=t-k$,
\bes{
&{\p}(\max_{k<t\leq k+C\log b} Z_{i,j,t}\leq b | Z_{i,j,k}=b)\\
\sim & {\p}\Big(\max_{1\leq l\leq C\log b}\Big\{ \sum_{p=1}^l \big[ -X_{k+p}-d_3(1+o(1)) \big] \Big\}\leq 0 \Big) .
}
Note that $X_{k+p}: p\geq 1$ are i.i.d. $\sim N(0,1)$.
Using the union bound and the facts that $\log b\to \infty$, $d_3\asymp 1$, we have
\bes{
&{\p}\Big(\max_{l > C\log b}\Big\{ \sum_{p=1}^l \big[ -X_{k+p}-d_3(1+o(1)) \big] \Big\}> 0 \Big) \\
\leq &\sum_{l>C\log b}{\p}\Big( \sum_{p=1}^l \big[ -X_{k+p}-d_3(1+o(1)) \big] > 0 \Big) \\
\leq & \sum_{l>C\log b} \exp (- d_3^2(1+o(1))l/2)  = o(1) . 
}
Therefore,
\bes{
& {\p}\Big(\max_{1\leq l\leq C\log b}\Big\{ \sum_{p=1}^l \big[ -X_{k+p}-d_3(1+o(1)) \big] \Big\}\leq 0 \Big)  \\
= & {\p}\Big(\max_{l\geq 1}\Big\{ \sum_{p=1}^l \big[ -X_{k+p}-d_3(1+o(1)) \big] \Big\}\leq 0 \Big)+o(1)\\
= &\sqrt{2}d_3 \nu^{1/2}(2d_3) + o(1),
}
where the last equation is by Corollary 8.44 of \cite{Si85}.

Similar arguments for the other cases show that given $Z_{i,j,k}=b$, the event
\be{
\max_{0\leq r<s<t \leq m \atop |r-i|, |s-j|, |r-k|\leq C \log b} Z_{r,s,t}\leq b
}
is asymptotically the same as the event that six random walks starting from $0$ remain below $0$ 
at all positive times until $C \log b$.
These random walks are asymptotically independent and have independent Gaussian increments 
with variance 1 and means $-d_1, -d_1, -d_2, -d_2, -d_3, -d_3$ respectively.
Therefore,
\be{
{\p}(\max_{0\leq r<s<t \leq m \atop |r-i|, |s-j|, |r-k|\leq C \log b} Z_{r,s,t}\leq b | Z_{i,j,k}=b)
= \prod_{i=1}^3 (2d_i^2) \nu(2d_i)+o(1).
}
This proves \eq{18}.
Now that we have proved \eq{2.1*}, \eq{2.1} follows by the following claim and then by letting $c_0\to 0$.
\begin{claim}\label{claim1}
We have
\be{
\p (\max_{0\leq i < j < k\leq m: \atop j-i, k-j\geq c_0 b^2} |Z_{i,j,k}| \geq b)
\sim 2 \p (\max_{0\leq i < j < k\leq m: \atop j-i, k-j\geq c_0 b^2} Z_{i,j,k} \geq b).
}
\end{claim}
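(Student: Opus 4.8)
The plan is to pass from the two-sided to the one-sided maximum using the sign symmetry of the field, and then to show that the resulting ``cross term'' is negligible. Since $Z_{i,j,k}$ is a contrast annihilating a constant mean, it is a function of the centred variables $X_\ell-\mu$ only, so reflecting these through the origin carries the whole family $\{Z_{i,j,k}\}$ into $\{-Z_{i,j,k}\}$ without changing its joint law. Writing $\mathcal I=\{(i,j,k):0\le i<j<k\le m,\ j-i,k-j\ge c_0b^2\}$ and $p_+=\p(\max_{\mathcal I}Z_{i,j,k}\ge b)$, this gives $\p(\min_{\mathcal I}Z_{i,j,k}\le -b)=p_+$ as well, and inclusion--exclusion yields
\[
\p\bigl(\max_{\mathcal I}|Z_{i,j,k}|\ge b\bigr)=2p_+-q_b,\qquad q_b:=\p\bigl(\max_{\mathcal I}Z_{i,j,k}\ge b,\ \min_{\mathcal I}Z_{i,j,k}\le -b\bigr).
\]
By \eq{2.1*} and \eq{A.1} one has $p_+\asymp b^5\varphi(b)$, so it suffices to prove $q_b=o\bigl(b^5\varphi(b)\bigr)$, since then $\p(\max_{\mathcal I}|Z_{i,j,k}|\ge b)=2p_+(1+o(1))\sim 2\p(\max_{\mathcal I}Z_{i,j,k}\ge b)$.

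I would bound the cross term by a second moment. With $N_+=\#\{\alpha\in\mathcal I:Z_\alpha\ge b\}$ and $N_-=\#\{\alpha\in\mathcal I:Z_\alpha\le -b\}$, we have $q_b\le\E[N_+N_-]=\sum_{\alpha\ne\beta}\p(Z_\alpha\ge b,\,Z_\beta\le -b)$, the diagonal $\alpha=\beta$ contributing nothing because $\{Z_\alpha\ge b\}\cap\{Z_\alpha\le -b\}=\emptyset$. Each summand is a bivariate Gaussian tail governed by the single number $\rho_{\alpha\beta}=\mathrm{corr}(Z_\alpha,Z_\beta)=\langle c_\alpha,c_\beta\rangle$, where $Z_\alpha=\langle c_\alpha,\tilde X\rangle$ with $\tilde X=(X_\ell-\mu)_\ell$ and $\|c_\alpha\|=1$; optimizing the Gaussian exponent over the corner $x=y=b$ gives, for a polynomial factor $P(b)$,
\[
\p(Z_\alpha\ge b,\,Z_\beta\le -b)\le P(b)\,\exp\!\Bigl(-\tfrac{b^2}{1-\rho_{\alpha\beta}}\Bigr),\qquad \rho_{\alpha\beta}\in(-1,1).
\]
This should be compared with the single-exceedance rate $b^2/2$: the joint event costs strictly more whenever $\rho_{\alpha\beta}>-1$, and is harmless (rate $\ge b^2$) whenever $\rho_{\alpha\beta}\ge 0$. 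Two exceedances of opposite sign cannot moreover come from a single local cluster, since conditioning on $Z_\alpha\ge b$ forces the neighbouring perturbations $Z_\beta$ to lie near $b$ as well (the localization underlying Claim \ref{claim2}); the real work is therefore confined to pairs whose contrasts are strongly anti-correlated.

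The hard part is a uniform bound $\rho_{\alpha\beta}\ge -1+\eta$ for some fixed $\eta>0$, valid for all distinct $\alpha,\beta\in\mathcal I$. The geometry makes this plausible: $c_{i,j,k}$ is a step function equal to a positive constant on $(i,j]$ and a negative constant on $(j,k]$, so $\rho_{\alpha\beta}\to -1$ would force $c_\beta\to -c_\alpha$, i.e. the positive block of $c_\beta$ would have to lie to the right of its negative block, contradicting $i'<j'<k'$; the lower bounds $j-i,k-j\ge c_0b^2$ built into $\mathcal I$ keep this separation nondegenerate and should make the bound uniform in $b$. Granting it, $1-\rho_{\alpha\beta}\le 2-\eta$, so each summand is at most $P(b)e^{-b^2/(2-\eta)}$, and since $\#\mathcal I\asymp m^3\asymp b^6$,
\[
q_b\le (\#\mathcal I)^2\,P(b)\,e^{-b^2/(2-\eta)}\le b^{O(1)}e^{-b^2/(2-\eta)}=o\bigl(b^5\varphi(b)\bigr),
\]
because $1/(2-\eta)>1/2$. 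Hence $q_b=o(p_+)$ and the claim follows. I expect the uniform anti-correlation bound $\rho_{\alpha\beta}\ge -1+\eta$ --- equivalently, that no admissible contrast is asymptotically the negative of another --- to be the one genuinely nontrivial step; the remainder is the symmetry reduction and a routine bivariate Gaussian tail estimate.
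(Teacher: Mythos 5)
Your proposal is correct and follows essentially the same route as the paper's proof: symmetry of the field plus inclusion--exclusion, then a union bound over the $O(b^6)\times O(b^6)$ opposite-sign pairs, with each joint exceedance probability killed by a bivariate Gaussian estimate that requires the correlations $\rho_{\alpha\beta}$ to be bounded away from $-1$. The anti-correlation bound you flag as the genuinely nontrivial step is exactly what the paper also leaves at the level of assertion (it states that each conditional term is ``subgaussian in $b$''), and note that the uniformity $\rho_{\alpha\beta}\geq -1+\eta$ uses not only the lower bounds $j-i,k-j\geq c_0b^2$ but also the standing assumption $m\asymp b^2$, which makes all admissible block lengths comparable.
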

\end{proof}

\begin{proof}[Proof of Claim \ref{claim2}]
We use the union bound 
\bes{
& \p (\max_{0\leq r<s<t \leq m \atop s-r, t-s \geq c_0 b^2, (|r-i| \vee |s-j| \vee |t-k|)\geq C \log b } Z_{r,s,t}> b | Z_{i,j,k}=b)\\
\leq & \sum_{0\leq r<s<t \leq m \atop s-r, t-s \geq c_0 b^2, (|r-i| \vee |s-j| \vee |t-k|)\geq C \log b } \p ( Z_{r,s,t}> b | Z_{i,j,k}=b).
}
The number of terms in the summation is $O(b^6)$.
Note that $Z_{i,j,k}$ and $Z_{r,s,t}$ are both weighted sums of $\asymp b^2$ terms of i.i.d. 
Gaussian variable with weights $\asymp 1/b$ up to sign. If $r,s,t$ are as indicated above in the summation, 
then it is either that $Z_{i,j,k}$ ($Z_{r,s,t}$ resp.) contains at least $C \log b$ terms which are not in 
$Z_{r,s,t}$ ($Z_{i,j,k}$ resp.), or for at least $C \log b$ terms, the weights have opposite sign in $Z_{i,j,k}$ and 
$Z_{r,s,t}$. Therefore, their correlation is at most $1-c\log b/b^2$ where $c$ can be chosen as a universal positive constant when $C$ is larger than some fixed constant. 
Therefore, each conditional probability in the summation is bounded by
\be{
C_1 \exp(-c_2 C \log b)=C_1 b^{-c_2 C},
}
where $C_1$ and $c_2$ are positive constants.
Hence, the summation tends to 0 by choosing a large enough $C$.

\end{proof}

\begin{proof}[Proof of Claim \ref{claim1}]
We write
\bes{
& \p \Big( \max_{0\leq i<j<k \leq m: \atop j-i, k-j\geq c_0 b^2 } |Z_{i,j,k}| \geq b \Big)\\
= &  \p \Big(  \max_{0\leq i<j<k \leq m: \atop j-i, k-j\geq c_0 b^2 } Z_{i,j,k}  \geq b \Big)+ \p \Big(  \max_{0\leq i<j<k \leq m: \atop j-i, k-j\geq c_0 b^2 } \{-Z_{i,j,k}\} \geq b \Big)\\
&- \p \Big(  \max_{0\leq i<j<k \leq m: \atop j-i, k-j\geq c_0 b^2 } Z_{i,j,k} \geq b,
\max_{0\leq r<s<t \leq m: \atop j-i, k-j\geq c_0 b^2 } \{-Z_{r,s,t}\} \geq b \Big)
}
The first two terms are equal by symmetry.
The third term is bounded by
\bes{
&\sum_{0\leq i<j<k \leq m: \atop j-i, k-j\geq c_0 b^2 } \p \Big(   Z_{i,j,k} \geq b,
\max_{0\leq r<s<t \leq m: \atop s-r, t-s\geq c_0 b^2 } \{-Z_{r,s,t}\} \geq b \Big)\\
=&\sum_{0\leq i<j<k \leq m: \atop j-i, k-j\geq c_0 b^2 } \p (Z_{i,j,k}\geq b) \p \Big(\max_{0\leq r<s<t \leq m: \atop s-r, t-s\geq c_0 b^2} \{-Z_{r,s,t}\} \geq b  \Big| Z_{i,j,k}\geq b  \Big).
}
We only need to show that the conditional probability above tends to 0.
We again use the union bound
\bes{
&\p \Big(\max_{0\leq r<s<t \leq m: \atop s-r, t-s\geq c_0 b^2} \{-Z_{r,s,t}\} \geq b  \Big| Z_{i,j,k}\geq b  \Big)\\
\leq & \sum_{0\leq r<s<t \leq m: \atop s-r, t-s\geq c_0 b^2} \p \Big( \{-Z_{r,s,t}\} \geq b  \Big| Z_{i,j,k}\geq b  \Big).
}
There are totally $O(b^6)$ terms in the summation, and each term is subgaussian in $b$. 
Therefore, the conditional probability tends to 0.
\end{proof}

\subsection{Appendix B}

In this appendix, we prove Theorem \ref{t4}. Theorem \ref{t5} follows from the same arguments and
therefore its proof is omitted. The claims stated in the proof will be proved below.

\begin{proof}[Proof of Theorem \ref{t4}]
We use $C$ and $c$ to denote positive constants, which may differ in different expressions.

We denote the probability on the right-hand side of \eq{6} by $p$.
Note that
\be{
p\geq  \p (W_1>a)\geq c e^{-a}.
}
We can decompose $U_{t,\tau,\mu}$ as
\be{
U_{t,\tau,\mu}=\sum_{k=1}^{M+1} (V_{t,k} + Y_k),
}
where
\be{
V_{t,k}:=V_{t,k,\tau}= \frac{(S_{t_k}-S_{t_{k-1}})^2}{2(t_k-t_{k-1})}
- \frac{(S_{\tau_k}-S_{\tau_{k-1}})^2}{2(\tau_k-\tau_{k-1})}
}
and
\be{
Y_k:=Y_{k,\tau,\mu}=\frac{(S_{\tau_k}-S_{\tau_{k-1}}-(\tau_k-\tau_{k-1})\mu_k)^2}{2(\tau_k-\tau_{k-1})}.
}
Given $\tau$ and $\mu$, $\{2Y_k: 1\leq k\leq M+1\}$ are independent and identically distributed $\chi^2(1)$ random variables.
Therefore, we have
\besn{\label{9}
&\p_{\tau, \mu}(\max_{t: |t_k-\tau_k|\leq n_k}U_{t,\mu}>a)\\
=&\int_{0}^{2a}\p_{\tau, \mu}(\max_{t: |t_k-\tau_k|\leq n_k}\sum_{k=1}^{M+1} V_{t,k} >a-\frac{y}{2}|\sum_{k=1}^{M+1} Y_k=\frac{y}{2}) f_{\chi^2_{M+1}}(y) dy\\
&+ \int_{2a}^\infty f_{\chi^2_{M+1}}(y) dy,
}
where $f_{\chi^2_j}(\cdot)$ denotes the density function of a $\chi^2(j)$ random variable.
Under condition \eq{5}, $Y_k\leq a$ implies that
\ben{\label{8}
\frac{S_{\tau_k}-S_{\tau_{k-1}}}{\tau_k-\tau_{k-1}}=\mu_k + o(1),\quad 1\leq k\leq M+1.
}

\begin{claim}\label{cl1}
Conditioning on $\{S_{\tau_k}: 1\leq k\leq M+1\}$ such that \eq{8} is satisfied, we have, with probability $1-o(p)$,
\ben{\label{7}
\frac{S_{t_k}-S_{t_{k-1}}}{t_k-t_{k-1}}=\mu_k + o(1), \quad  1\leq k\leq M+1
}
for all $t$ such that $|t_k-\tau_k|\leq n_k$.
\end{claim}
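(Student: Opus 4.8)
The plan is to reduce \eq{7} to a uniform control of the partial-sum process in the $n_k$-neighborhoods of the change-points, and then to bound the resulting fluctuations by a sub-Gaussian maximal estimate whose conditional tail is $o(p)$. Since $p\geq ce^{-b}$, it suffices throughout to produce bounds of order $o(e^{-b})$.

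First I would isolate the boundary contribution. For each $1\leq k\leq M+1$ and each admissible $t$ write
\be{
\frac{S_{t_k}-S_{t_{k-1}}}{t_k-t_{k-1}}
=\frac{(S_{\tau_k}-S_{\tau_{k-1}})+R_k}{t_k-t_{k-1}},
\qquad
R_k:=(S_{t_k}-S_{\tau_k})-(S_{t_{k-1}}-S_{\tau_{k-1}}).
}
By the conditioning hypothesis \eq{8}, $S_{\tau_k}-S_{\tau_{k-1}}=m_k(\mu_k+o(1))$, and by \eq{5} the denominator is $t_k-t_{k-1}=m_k+O(n_{k-1}+n_k)=m_k(1+o(1))$. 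Because $M$ is fixed and the $\mu_k$ are bounded, \eq{7} follows once I show that $R_k/m_k=o(1)$, uniformly over all $t$ with $|t_l-\tau_l|\leq n_l$.

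Next I would split $R_k$ into a drift and a centered part. Each of $S_{t_k}-S_{\tau_k}$ and $S_{t_{k-1}}-S_{\tau_{k-1}}$ is a sum of at most $n_k$ (resp. $n_{k-1}$) observations drawn from two adjacent segments, so its conditional mean is $O(n_{k-1}+n_k)$; divided by $m_k$ this is $o(1)$ by \eq{5}. For the centered part the task is to bound, for fixed $\epsilon>0$,
\be{
\p\Bigl(\max_{|s-\tau_k|\leq n_k}\bigl|(S_s-S_{\tau_k})-\E[S_s-S_{\tau_k}\mid\{S_{\tau_l}\}]\bigr|>\epsilon m_k\ \Big|\ \{S_{\tau_l}\}\Bigr).
}
Conditioning on the segment sums makes each short partial sum a piece of a Brownian bridge; for $|s-\tau_k|=\ell\leq n_k$ its conditional variance is $\ell(1-\ell/m')=\ell(1+o(1))\leq n_k(1+o(1))$, where $m'\in\{m_k,m_{k+1}\}$ is the length of the segment containing $s$ and $n_k\ll m_k\wedge m_{k+1}$. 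Thus the centered process is conditionally sub-Gaussian with variance proxy $\asymp n_k$, and a union bound over the at most $2n_k+1$ values of $s$ together with the Gaussian tail gives a bound of order $n_k\exp(-c\,\epsilon^2 m_k^2/n_k)$.

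The decisive point is the upper constraint $n_k\ll(m_k\wedge m_{k+1})/b$ in \eq{5}: it gives $n_k b\ll m_k$, hence $m_k^2/n_k\gg m_k b\gg b$, so the exponent dominates both $b$ and $\log n_k$ and the displayed bound is $o(e^{-b})=o(p)$. A union bound over the finitely many indices $1\leq k\leq M$ preserves this rate and yields \eq{7}. I expect the main obstacle to lie precisely in the conditioning step: one must check that pinning the segment sums does not inflate the local fluctuations, i.e. that over a window of length $n_k\ll m_k$ the bridge agrees with a free random walk up to $o(1)$ corrections in both mean and variance. Once this is established, the maximal estimate is routine, and the slack $m_k^2/n_k\gg b$ furnished by \eq{5} is exactly what converts it into the required $o(p)$ bound.
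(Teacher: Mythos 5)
Your proposal is correct and takes essentially the same route as the paper's proof: condition on the segment sums so the local increments become Gaussian bridge pieces with variance $O(n_{k-1}+n_k)$ (after normalizing, $O((n_{k-1}+n_k)/m_k^2)$), apply a union bound over the window positions, and use the slack $m_k^2/(n_{k-1}+n_k)\gg b$ from \eq{5} against $p\geq ce^{-b}$ to make the tail $o(p)$. The only organizational difference is that the paper bounds the deviation of the ratio $(S_{t_2}-S_{t_1})/(t_2-t_1)$ directly with a union bound over the $O(n_1n_2)$ pairs $(t_1,t_2)$, whereas you first split off the drift and then control the maximal fluctuation of the partial-sum process near each change-point; both hinge on the identical bridge-variance computation and exponential-tail estimate.
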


From Claim \ref{cl1}, in the following we can assume \eq{7}.
For $|t_1-\tau_1|\leq n_1$, we have
\bes{
\frac{S_{t_1}^2}{2t_1}-\frac{S_{\tau_1}^2}{2\tau_1} =& \frac{1}{2} [(S_{t_1}/t_1)(t_1/\tau_1)^{1/2}+S_{\tau_1}/\tau_1][S_{t_1} (\tau_1/t_1)^{1/2}-S_{\tau_1}]\\
=& (\mu_1+o(1))[S_{t_1}-S_{\tau_1}-(t_1-\tau_1)\frac{\mu_1+o(1)}{2}].
}
Similarly, for $2\leq k\leq M$ and $|t_k-\tau_k|\leq n_k$, $|t_{k-1}-\tau_{k-1}|\leq n_{k-1}$,
\bes{
&\frac{(S_{t_k}-S_{t_{k-1}})^2}{2(t_k-t_{k-1})}-\frac{(S_{\tau_k}-S_{\tau_{k-1}})^2}{2(\tau_k-\tau_{k-1})}\\
=&(\mu_k+o(1))[S_{\tau_{k-1}}-S_{t_{k-1}}+(t_{k-1}-\tau_{k-1})\frac{\mu_k+o(1)}{2}]\\
&+ (\mu_k+o(1))[S_{t_k}-S_{\tau_k}-(t_k-\tau_k)\frac{\mu_k+o(1)}{2}],
}
and for $|t_M-\tau_M|\leq n_M$,
\bes{
&\frac{(S_{m}-S_{t_{M}})^2}{2(m-t_{M})}-\frac{(S_{m}-S_{\tau_{M}})^2}{2(m-\tau_{M})}\\
 =&(\mu_{M+1}+o(1))[S_{\tau_M}-S_{t_M}+(t_M-\tau_M)\frac{\mu_{M+1}+o(1)}{2}].
}
Therefore,
\ben{\label{10}
\max_{t: |t_k-\tau_k|\leq n_k}\sum_{k=1}^{M+1} V_{t,k}
=\max_{t: |t_k-\tau_k|\leq n_k}\sum_{k=1}^M (-\delta_k+o(1))[S_{t_k}-S_{\tau_k}-(t_k-\tau_k)\frac{\mu_{k}+\mu_{k+1}+o(1)}{2}].
}

\begin{claim}\label{cl2}
Let $I$ denote the index set $\{1\leq i\leq m:  \tau_k-n_k<i\leq \tau_k+n_k\ \text{for some}\ k=1,\dots, M\}$.
Given $\mu_k'=\mu_k+o(1)$, $1\leq k\leq M+1$, we have, up to an absolute error of $o(p)$ for each of the two probabilities below,
\be{
\p_{\tau, \mu} (  \{X_i: i\in I\}\in A  |\frac{S_{\tau_k}-S_{\tau_{k-1}}}{\tau_k-\tau_{k-1}}=\mu_k', , 1\leq k\leq M+1)\sim  \p (\{X'_i: i\in I\}\in A),
}
where $A$ is an arbitrary Borel set in $\mathbb{R}^{\#\{I\}}$, 
$X$'s are as in the theorem, $X'$'s are independent such that for each $1\leq k\leq M+1$,
$\{X'_i: \tau_{k-1} <i\leq \tau_{k-1}+n_{k-1}\ \&\ i\in I\}$ and $\{X'_i: \tau_k-n_k< i\leq \tau_k\ \&\ i\in I\}$ are identically distributed with distribution $N(\mu_k',1)$.
\end{claim}

Let $\{\xi_j\}_{j\geq 1}$ be independent and identically distributed as $N(0,1)$.
From $n_k \delta_k^2 \gg a$ and $p\geq c e^{-a}$, we have
\be{
\sum_{n>n_k} \p (|\delta_k+o(1)| (\sum_{j=1}^n \xi_j-n |\delta_k+o(1)|/2) \geq a-y/2)\leq C \sum_{n>n_k} e^{-\frac{1}{2}n (\delta_k+o(1))^2}=o(p).
}
From \eq{8}, \eq{10}, Claim \ref{cl2} and the above bound, we have, up to an absolute error of $o(p)$ for each of the two probabilities below,
\be{
\p_{\tau, \mu} (\max_{t: |t_k-\tau_k|\leq n_k}\sum_{k=1}^{M+1} V_{t,k}>a-y/2 | \sum_{k=1}^{M+1} Y_k=\frac{y}{2})
\sim \p (\sum_{k=1}^M \widetilde{W}_k> a-y/2)
}
where $\{\widetilde{W}_k\}_{1\leq k\leq M}$ are independent,
\be{
\widetilde{W}_k=\max \{\widetilde{W}_k^-, \widetilde{W}_k^+\},
}
$\widetilde{W}_k^-$ and $\widetilde{W}_k^+$ are independent and identically distributed, and
\ben{\label{11}
\widetilde{W}_k^-=\sup_{i>0} |\delta_k+o(1)| (\sum_{j=1}^i \xi_j - i |\delta_k+o(1)|/2).
}

We choose $y_1$ and $z$ such that
$1 \ll z \ll \log(y_1)\ll \log\log(a)$.
From (8.49) of Siegmund (1985), we have, for any $z_1\gg 1$,
\be{
{\p}(\widetilde{W}_k>z_1)\sim \p (W_k>z_1).
}
This, together with
Claim \ref{cl3} and Claim \ref{cl4} below, proves the theorem.

\begin{claim}\label{cl3}
We have
\be{
\int_{2b-y_1}^\infty f_{\chi_{M+1}^2} (y)dy=o(p).
}
\end{claim}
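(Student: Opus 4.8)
The plan is to evaluate the integral on the left exactly through the standard chi-squared tail asymptotic, and then to compare it against a lower bound on $p$ that captures the genuine polynomial enhancement of $p$ over the bare rate $e^{-b}$. First I would recall that for fixed degrees of freedom the survival function of a chi-squared satisfies $\p\{\tfrac12\chi^2_{M+1}>x\}\asymp x^{(M-1)/2}e^{-x}$ as $x\to\infty$ (integration by parts, the leading contribution coming from $y$ near the lower endpoint). Since $\int_{2b-y_1}^\infty f_{\chi^2_{M+1}}(y)\,dy=\p\{\tfrac12\chi^2_{M+1}>b-\tfrac{y_1}{2}\}$ and $y_1=o(b)$, this gives
\[
\int_{2b-y_1}^\infty f_{\chi^2_{M+1}}(y)\,dy\asymp b^{(M-1)/2}\,e^{y_1/2}\,e^{-b}.
\]
The hypothesis $1\ll z\ll \log y_1\ll \log\log b$ forces $\log y_1=o(\log\log b)$, hence $y_1=o(\log b)$ and therefore $e^{y_1/2}=b^{o(1)}$; so the integral is of order $b^{(M-1)/2+o(1)}e^{-b}$.

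The crucial step is a sharpened lower bound on $p$. The bound $p\geq ce^{-b}$ recorded at the start of the proof is \emph{not} enough, precisely because the factor $e^{y_1/2}\to\infty$ would then defeat the comparison. Since all $W_k\geq0$, for $M\geq1$ we have $p\geq \p\{W_1+\tfrac12\chi^2_{M+1}>b\}$, and I would evaluate this by conditioning on $W_1$:
\[
\p\{W_1+\tfrac12\chi^2_{M+1}>b\}=\int_0^\infty \p\{\tfrac12\chi^2_{M+1}>b-w\}\,\p(W_1\in dw).
\]
Using that $W_1$ has density at least $2\nu(|\delta_1|)\big(1-\nu(|\delta_1|)\big)e^{-w}$ on $(0,\infty)$ (from \eq{51}), together with $\p\{\tfrac12\chi^2_{M+1}>b-w\}\asymp(b-w)^{(M-1)/2}e^{-(b-w)}$, the integrand on $(0,b/2)$ is of order $e^{-b}(b-w)^{(M-1)/2}$, and integrating in $w$ contributes one extra power of $b$, so that $p\geq c\,b^{(M+1)/2}e^{-b}$. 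The density bound is legitimate because $|\delta_1|\asymp1$ keeps $\nu(|\delta_1|)$ bounded away from $0$ and from $1$. Intuitively, convolving the exponential-tailed $W_1$ with the chi-squared counts the continuum of ways to split $b$ between the two summands, which is exactly the mechanism the trivial bound misses.

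Combining the two estimates,
\[
\frac{\int_{2b-y_1}^\infty f_{\chi^2_{M+1}}(y)\,dy}{p}=O\!\left(\frac{b^{(M-1)/2}e^{y_1/2}e^{-b}}{b^{(M+1)/2}e^{-b}}\right)=O\!\left(e^{y_1/2}b^{-1}\right)=O\!\left(b^{o(1)-1}\right)\to0,
\]
which is the assertion. The step I expect to be the main obstacle is obtaining the sharpened lower bound $p\geq c\,b^{(M+1)/2}e^{-b}$: one must verify that including at least one exponential-tailed variable $W_k$ alongside $\tfrac12\chi^2_{M+1}$ supplies the extra factor of $b$ needed to absorb the $e^{y_1/2}=b^{o(1)}$ blow-up that arises from evaluating the chi-squared tail at $2b-y_1$ rather than at $2b$, since the bound $p\geq ce^{-b}$ on its own is insufficient.
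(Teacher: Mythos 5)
Your proof is correct and follows essentially the same route as the paper: both lower-bound $p$ by $\p\{W_1+\tfrac12\chi^2_{M+1}>b\}$, exploit the exponential tail of $W_1$ to gain a polynomial factor in $b$ over the bare $\chi^2_{M+1}$ tail, and then note that $e^{y_1/2}=b^{o(1)}$ because $\log y_1\ll\log\log b$. The only difference is bookkeeping: the paper compares the density of $W_1$ with that of $\chi^2_1/2$ to get $p\geq c\,b^{1/2}\,\p(\chi^2_{M+1}\geq 2b)$, whereas your direct convolution with the exponential density bound yields the slightly sharper $p\geq c\,b^{(M+1)/2}e^{-b}$; either margin suffices.
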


\begin{claim}\label{cl4}
We have
\be{
{\p}(\sum_{k=1}^M \widetilde{W}_k>y_1/2)\sim \p (\sum_{k=1}^M \widetilde{W}_k>y_1/2, \min_{1\leq k\leq M}\widetilde{W}_k >z).
}
\end{claim}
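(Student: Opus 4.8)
The plan is to show that, under the separation $1\ll z\ll \log(y_1)$, the event $\{\sum_k\widetilde W_k>y_1/2\}$ is carried almost entirely by the sub-event on which \emph{every} coordinate exceeds $z$. First I would write $\{\sum_k\widetilde W_k>y_1/2\}$ as the disjoint union of $\{\sum_k\widetilde W_k>y_1/2,\ \min_k\widetilde W_k>z\}$ and $\{\sum_k\widetilde W_k>y_1/2,\ \min_k\widetilde W_k\le z\}$; since the first of these is the right-hand side of the claim, it suffices to prove that the second has probability $o\big(\p(\sum_k\widetilde W_k>y_1/2)\big)$. By a union bound over the index $k_0$ realizing the minimum, it is enough to establish, for each fixed $k_0$,
\[
\p\Big(\sum_{k=1}^M\widetilde W_k>y_1/2,\ \widetilde W_{k_0}\le z\Big)=o\Big(\p\big(\sum_{k=1}^M\widetilde W_k>y_1/2\big)\Big).
\]
On $\{\widetilde W_{k_0}\le z\}$ one has $\sum_{k\ne k_0}\widetilde W_k>y_1/2-z$, so the left-hand side is bounded by $\p(\sum_{k\ne k_0}\widetilde W_k>y_1/2-z)$, an $(M-1)$-fold sum against an $M$-fold sum in the denominator.

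Next I would invoke sharp tail asymptotics for sums of independent variables with exponential rate-one tails. By the estimate $\p(\widetilde W_k>z)\sim\p(W_k>z)$ quoted from (8.49) of Siegmund (1985) together with \eq{51}, each $\widetilde W_k$ has tail probability and tail density asymptotic to $a_k e^{-x}$ with $a_k=2\nu(|\delta_k|)$. For independent variables of this type the convolution tail is of Gamma type,
\[
\p\Big(\sum_{k\in A}\widetilde W_k>t\Big)\sim\Big(\prod_{k\in A}a_k\Big)\frac{t^{|A|-1}}{(|A|-1)!}\,e^{-t}\qquad(t\to\infty),
\]
the polynomial factor $t^{|A|-1}$ appearing because the moment generating function at the rate-one tilt diverges. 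Applying this with $A=\{1,\dots,M\}$, $t=y_1/2$ for the denominator and with $A=\{1,\dots,M\}\setminus\{k_0\}$, $t=y_1/2-z$ for the numerator bound, the ratio is, to leading order,
\[
\frac{\p(\sum_{k\ne k_0}\widetilde W_k>y_1/2-z)}{\p(\sum_{k=1}^M\widetilde W_k>y_1/2)}\asymp\frac{(y_1/2-z)^{M-2}/(M-2)!}{(y_1/2)^{M-1}/(M-1)!}\,e^{z}\asymp\frac{e^{z}}{y_1},
\]
where losing one power of $t$ produces the factor $1/y_1$ and shifting the level by $z$ produces $e^{z}$. Because $z\ll\log(y_1)$ forces $e^{z}=y_1^{o(1)}\ll y_1$, the ratio tends to $0$; the substantive case is $M\ge2$, while for $M=1$ the bad event is empty once $y_1/2>z$, so the claim is immediate.

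The one genuinely delicate point, and the step I expect to be the main obstacle, is justifying the Gamma-type convolution asymptotic, since the $\widetilde W_k$ sit exactly on the boundary of the Cram\'er regime: the moment generating function at the critical tilt $\theta=1$ is infinite, so an off-the-shelf Chernoff bound loses the polynomial factor entirely, while tilting with $\theta\uparrow1$ (e.g.\ $\theta=1-1/\log y_1$) introduces a fatal $\exp(c\,y_1/\log y_1)$ correction and is far too lossy to reach the $o(\cdot)$ conclusion. I would instead derive the asymptotic directly, by induction on $|A|$, writing $\p(\sum_{k\in A}\widetilde W_k>t)=\int\p(\widetilde W_{k_1}>t-s)\,dG_{A\setminus\{k_1\}}(s)$ and localizing the mass that contributes to the convolution near the two endpoints of the range; the atom of $\widetilde W_k$ at $0$ and its continuous part on $(0,\infty)$ are both harmless for the tail, and one may equally replace $\widetilde W_k$ throughout by the limit law $W_k$, whose tail $2\nu(|\delta_k|)e^{-x}-\nu^2(|\delta_k|)e^{-2x}$ is of exactly this exponential class.
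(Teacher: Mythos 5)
Your proposal is correct and takes essentially the same route as the paper's proof: reduce to showing the event $\{\min_k \widetilde{W}_k \leq z\}$ is conditionally negligible, apply a union bound over the small coordinate, invoke the Gamma-type tail $\p(\sum_{k}\widetilde{W}_k > t) \asymp t^{M-1}e^{-t}$ for both the $M$-fold and $(M-1)$-fold sums, and conclude from $M e^{z}/y_1 = o(1)$ since $z \ll \log(y_1)$. The only difference is one of emphasis: the paper merely asserts the tail asymptotic (and only up to constants, which indeed suffices), whereas you additionally sketch its justification by induction on the number of summands — a reasonable supplement, since that assertion is the one nontrivial ingredient of the argument.
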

\end{proof}

\begin{proof}[Proof of Claim \ref{cl1}]
Assume $k=2$. The other cases follow from the same argument.
It suffices to show that there exists $\epsilon \to 0$ such that
\ben{\label{5.2}
n_1 n_2 {\p} \Big(\frac{|S_{t_2}-S_{t_1}|}{t_2-t_1}\geq \epsilon \Big| S_{\tau_2}-S_{\tau_1}=0 \Big)=o(p)
}
for all $t_1, t_2$ such that $|t_1-\tau_1|\leq n_1$ and $|t_2-\tau_2|\leq n_2$.
Note that conditioning on $S_{\tau_2}-S_{\tau_1}=0$, the mean value of $(S_{t_2}-S_{t_1})/(t_2-t_1)$ is $0$ and by $n_k\ll (m_k \wedge m_{k+1})$, the variance is bounded by $C(n_1+n_2)/m_2^2$.
Therefore,
\besn{\label{5.3}
& \p \Big(\frac{|S_{t_2}-S_{t_1}|}{t_2-t_1}\geq \epsilon \Big| S_{\tau_2}-S_{\tau_1}=0 \Big)\\
\leq & \frac{ C \sqrt{(n_1+n_2)}}{\epsilon m_2} \exp[-\epsilon^2 m_2^2/(2C (n_1+n_2))+\log(n_1)+\log(n_2)].
}
For this to be of smaller order than $p$, we need to choose $\epsilon$ such that
\be{
\frac{\epsilon^2 m_2^2}{2C(n_1+n_2)}-\log(n_1)-\log(n_2)-a\to \infty.
}
Such an $\epsilon\to 0$ exists because $\frac{m_2^2}{n_1+n_2}\gg a$.
\end{proof}

\begin{proof}[Proof of Claim \ref{cl2}]
We only prove that the conditional $X_i$'s can be replaced by the unconditional $X_i'$'s for those $\{\tau_1<i\leq \tau_1+n_1\}$ and $\{\tau_2-n_2<i\leq \tau_2\}$. The other cases follow from the same argument.
Choose $D$ such that
\be{
D^2\ll m_2\ \text{and}\ \frac{D^2}{n_1+n_2}\gg a.
}
Such a $D$ exists because of \eq{5}.
Define
\be{
R_{n_1}'=\sum_{i=\tau_1+1}^{\tau_1+n_1} X_i,\quad  R_{n_2}''=\sum_{i=\tau_2-n_2+1}^{\tau_2} X_i,
}
\be{
\tilde{R}_{n_1}'=\sum_{i=\tau_1+1}^{\tau_1+n_1} X_i',\quad \tilde{R}_{n_2}''=\sum_{i=\tau_2-n_2+1}^{\tau_2} X_i'.
}
By straightforward calculations, we have
\be{
\p_{\tau,\mu} (|R_{n_1}'+R_{n_2}''-\mu'_2(n_1+n_2)|\geq D|\frac{S_{\tau_2}-S_{\tau_{1}}}{\tau_2-\tau_{1}}=\mu_2')\ll p.
}
and
\be{
\p (|\tilde{R}_{n_1}'+\tilde{R}_{n_2}''-\mu'_2(n_1+n_2)|\geq D)\ll p.
}
On the complementary event that $|R_{n_1}'+R_{n_2}''-\mu'_2(n_1+n_2)|< D$ and $|\tilde{R}_{n_1}'+\tilde{R}_{n_2}''-\mu'_2(n_1+n_2)|< D$, 
the conditional density function of $\{X_i: \tau_1<i\leq \tau_1+n_1\ \text{or}\ \tau_2-n_2<i\leq \tau_2\}$ given $(S_{\tau_2}-S_{\tau_1})/(\tau_2-\tau_1)=\mu_2'$ is asymptotically the same as the unconditional density function of $\{X_i': \tau_1<i\leq \tau_1+n_1\ \text{or}\ \tau_2-n_2<i\leq \tau_2\}$ by a straightforward calculation similar to that of \cite{DiFr88}.
\end{proof}

\begin{proof}[Proof of Claim \ref{cl3}]
Note from \eq{51} that the pdf of $W_1$ for large $x$ is larger than that of $\chi_1^2/2$.
Therefore,
\be{
p\geq c \p (\chi^2_{M+2}\ge 2a)\geq c a^{1/2} \p (\chi^2_{M+1}\geq 2a),
}
which is of higher order than
\be{
\p (\chi^2_{M+1}\geq 2a-y_1)
}
by the choice of $y_1$.
\end{proof}

\begin{proof}[Proof of Claim \ref{cl4}]
It suffices to show that for $M\geq 2$,
\be{
{\p} (\min_{1\leq k\leq M}\widetilde{W}_k\leq z | \sum_{k=1}^M\widetilde{W}_k>y_1/2)=o(1).
}
Note that ${\p} (\sum_{k=1}^M\widetilde{W}_k>y_1/2)\asymp y_1^{M-1}e^{-y_1/2}$.
Therefore,
\bes{
&{\p}(\min_{1\leq k\leq M}\widetilde{W}_k\leq z | \sum_{k=1}^M\widetilde{W}_k>y_1/2)\\
\leq & \frac{M {\p}(\sum_{k=1}^{M-1}\widetilde{W}_k>y_1/2-z)}{{\p}(\sum_{k=1}^M\widetilde{W}_k>y_1/2))}
\asymp \frac{M y_1^{M-2} e^{-y_1/2+z}}{y_1^{M-1}e^{-y_1/2}}=o(1)
}
by the choice of $z$.
\end{proof}

\medskip

\bigskip\noindent{\bf Acknowledgement.}  The authors thank Nancy Zhang for 
several helpful discussions and suggestions. We also thank the anonymous referees for their detailed comments which led to many improvements.
XF was partially supported by NUS grant R-155-000-158-112, CUHK direct grant 4053234 and a CUHK start-up grant.
DS was partially supported by the National Science Foundation.

\setlength{\bibsep}{0.5ex}
\def\bibfont{\small}


\end{document}